\titlespacing{\section}{0cm}{3.5pc}{1.5pc}
\def\@citex[#1]#2{\if@filesw\immediate\write\@auxout{\string\citation{#2}}\fi
  \def\@citea{}\@cite{\@for\@citeb:=#2\do
    {\@citea\def\@citea{\@citesep}\@ifundefined
       {b@\@citeb}{{\bf ?}\@warning
       {Citation `\@citeb' on page \thepage \space undefined}}%
{\csname b@\@citeb\endcsname}}}{}}
\def\@citesep{; }
\newtheoremstyle{Zhu}{}{}{\itshape}{}{\bf}{}{.5em}{}
\theoremstyle{Zhu}
\newtheoremstyle{Zhu}{}{}{\itshape}{}{\bf}{}{.5em}{}
\theoremstyle{Zhu}
\newtheorem{theorem}{Theorem}[section]
\newtheorem{lemma}[theorem]{Lemma}
\newtheorem{coro}[theorem]{Corollary}
\newtheorem{prop}[theorem]{Proposition}
\newtheoremstyle{Kremark}{}{}{}{}{\bf}{}{.5em}{}
\theoremstyle{Kremark}
\newtheorem{defn}[theorem]{Definition}
\newtheorem{example}[theorem]{Example}
\newtheorem{other}{}
\title{ Depth and  Stanley depth of the path ideal associated to an $n$-cyclic graph}
\author{\begin{minipage}{0.8\textwidth}
 \hspace{4.5cm}   Guangjun Zhu \\[2mm] \normalsize
School of Mathematical Sciences,
Soochow University,
Suzhou, China \\
\end{minipage} }
\date{}
\begin{document}

\maketitle
\footnote{Supported by the National Natural Science Foundation of
China (11271275) and by  Foundation of Jiangsu Overseas Research \& Training Program for
University Prominent Young \& Middle-aged Teachers and Presidents and
 by Foundation of the Priority Academic Program Development of Jiangsu Higher Education Institutions. }

\footnote{ E-mail: zhuguangjun@suda.edu.cn}

\vspace{-2cm}
\begin{abstract}
{\noindent\bf Abstract.}  We compute the depth and Stanley depth for the quotient ring
of the path ideal of length $3$  associated to a $n$-cyclic graph, given some precise formulas for
depth when $n\not\equiv 1\,(\mbox{mod}\ 4)$, tight bounds when $n\equiv 1\,(\mbox{mod}\ 4)$
and for Stanley depth when $n\equiv 0,3\,(\mbox{mod}\ 4)$, tight bounds when $n\equiv 1,2\,(\mbox{mod}\ 4)$. Also, we give some  formulas for
depth and Stanley depth of a quotient  of the path ideals of length $n-1$ and $n$.

\vspace{3mm}
{\noindent\bf Keywords:} stanley depth, stanley inequality,  path ideal, cyclic graph.

\vspace{3mm}
{\noindent\bf Mathematics Subject Classification ($2010$): 13C15; 13P10; 13F20.}

\end{abstract}


\section{Introduction }

\hspace{3mm} Let $S=K[x_{1},\dots,x_{n}]$ be a
 polynomial ring  in $n$ variables over a field $K$
 and  $M$ a finitely
generated $\mathbb{Z}^{n}$-graded $S$-module. For a homogeneous element $u\in M$
and  a subset $Z\subseteq
 \{x_{1},\dots,x_{n}\}$,
 $uK[Z]$ denotes  the $K$-subspace of $M$ generated by  all the homogeneous elements of the form $uv$, where
$v$ is a monomial in $K[Z]$.
 The $\mathbb{Z}^{n}$-graded $K$-subspace $uK[Z]$ is said to be  a Stanley space of
 dimension $|Z|$  if it is a free $K[Z]$-module, where, as usual, $|Z|$ denotes the number of elements of $Z$.
 A Stanley decomposition of $M$ is a decomposition of $M$
  as a finite direct sum of $\mathbb{Z}^{n}$-graded $K$-vector spaces
  $$\mathcal{D}:  M=
\bigoplus\limits_{i=1}^{r} u_{i}K[Z_{i}]$$
 where each $u_{i}K[Z_{i}]$ is a Stanley space of $M$.
 The number $\mbox{sdepth}_{S}\,(\mathcal{D})=
\mbox{min}\{|Z_{i}| : i = 1,\dots,r\}$ is called the Stanley depth of decomposition $\mathcal{D}$ and the quantity
$$\mbox{sdepth}\,(M):= \mbox{max}\{\mbox{sdepth}\,(\mathcal{D}) \mid  \mathcal{D}\  \mbox{is a
Stanley decomposition of}\  M\}.$$
is called the Stanley depth of $M$. Stanley  \cite{S} conjectured that
$$\mbox{sdepth}\,(M)\geq \mbox{depth}\,(M)$$
for all $\mathbb{Z}^{n}$-graded $S$-modules $M$. This conjecture proves to be false, in general,
for $M=S/I$ and $M=J/I$, where $I\subset J\subset S$ are monomial ideals, see \cite{DGK}.

Herzog, Vl\u{a}doiu and Zheng  \cite{HVZ} introduced a method to
compute the Stanley depth of a factor of a monomial ideal which was later developed into an effective
algorithm by Rinaldo  \cite{R3} implemented in CoCoA \cite{C3}. However, it is difficult to compute this invariant, even in some very particular cases. For instance in \cite{BHK}
Bir\'o et al. proved that $\mbox{sdepth}\,(\frak{m})=\lceil \frac{n}{2}\rceil$
where $\frak{m}=(x_{1},\dots, x_{n})$ is the
graded maximal ideal of $S$ and  $\lceil \frac{n}{2}\rceil$
denote the smallest integer $\geq \frac{n}{2}$.
For a friendly introduction on Stanley depth we refer the reader to \cite{H}.

Let $I_{n,m}$ and $J_{n,m}$ be the paths ideals of length $m$ associated to the $n$-line, respectively $n$-cyclic, graph.
Cimpoea{s}   \cite{C1} proved that $\mbox{depth}\,(S/J_{n,2})=\lceil \frac{n-1}{3}\rceil$ and   when
 $n\equiv 0\,(\mbox{mod}\ 3)$ or $n\equiv 2\,(\mbox{mod}\ 3)$, $\mbox{sdepth}\,(S/J_{n,2})=\lceil \frac{n-1}{3}\rceil$
and  when $n\equiv 1\,(\mbox{mod}\ 3)$,  $\lceil \frac{n-1}{3}\rceil\leq \mbox{sdepth}\,(S/J_{n,2})\leq \lceil \frac{n}{3}\rceil$.
In \cite{C2}, he  also showed that
  $\mbox{sdepth}\,(S/I_{n,m})=\mbox{depth}\,(S/I_{n,m})=n+1-\lfloor \frac{n+1}{m+1}\rfloor-\lceil \frac{n+1}{m+1}\rceil,$
where $\lfloor \frac{n+1}{m+1}\rfloor$
denote the biggest integer $\leq \frac{n+1}{m+1}$. Using similar techniques, we prove that
$\mbox{sdepth}\,(S/J_{n,3})=n-\lfloor \frac{n}{4}\rfloor-\lceil \frac{n}{4}\rceil$ for $n\equiv 0\,(\mbox{mod}\ 4)$ or $n\equiv 3\,(\mbox{mod}\ 4)$ and
$n-\lfloor \frac{n}{4}\rfloor-\lceil \frac{n}{4}\rceil\leq \mbox{sdepth}\,(S/J_{n,3})\leq n+1-\lfloor \frac{n}{4}\rfloor-\lceil \frac{n}{4}\rceil$ for $n\equiv 1\,(\mbox{mod}\ 4)$ or $n\equiv 2\,(\mbox{mod}\ 4)$.
Also, we prove that
 $\mbox{depth}\,(S/J_{n,3})=n-\lfloor \frac{n}{4}\rfloor-\lceil \frac{n}{4}\rceil$ for $n\not\equiv 1\,(\mbox{mod}\ 4)$ and  $n-\lfloor \frac{n}{4}\rfloor-\lceil \frac{n}{4}\rceil\leq \mbox{depth}\,(S/J_{n,3})\leq n+1-\lfloor \frac{n}{4}\rfloor-\lceil \frac{n}{4}\rceil$ for $n\equiv 1\,(\mbox{mod}\ 4)$.
In Proposition \ref{prop1},  we prove that
 $\mbox{sdepth}\,(J_{n,3}/I_{n,3})=n+1-\lfloor \frac{n}{4}\rfloor-\lceil \frac{n}{4}\rceil$ for all $n\geq 4$.
In the third section, we prove that $\mbox{sdepth}\,(\frac{S}{J_{n,n-1}})=\mbox{depth}\,(\frac{S}{J_{n,n-1}})=n-2$ and
$n-3\leq \mbox{sdepth}\,(\frac{S}{J_{n,n-2}}),\mbox{depth}\,(\frac{S}{J_{n,n-2}})\leq n-2$.

\section{Depth and Stanley depth of quotient of the path ideal with length $3$}

In this section, we will give some formulas for depth  and  stanley depth of quotient of the path ideals of length $3$.
 We first recall some definitions about graphs and their path ideals in order to make this paper
self-contained. However, for more details on the notions, we refer the reader to \cite{V2,Z1,Z2}.

\begin{defn} \label{def1} Let $G=(V,E)$ be a graph with vertex set  $V=\{x_{1},\dots,x_{n}\}$ and edge set $E$. Then
$G=(V,E)$  is called an $n$-line graph, denoted by  $L_{n}$, if its  edge set is given by $E=\{x_{i}x_{i+1}\mid 1\leq i\leq n-1\}$.  Similarly, if   $n\geq 3$,
then $G=(V,E)$  is called an $n$-cyclic graph, denoted by  $C_{n}$, if its  edge set  is given by $E=\{x_{i}x_{i+1}\mid 1\leq i\leq n-1\}\cup\{x_{n}x_{1}\}$.
\end{defn}

\begin{defn} \label{def2} Let $G=(V,E)$ be a graph  with vertex set  $V=\{x_{1},\dots,x_{n}\}$.
A path of length $m$ in $G$ is an alternating sequence of
vertices and edges $w\!=\!\{x_{i}, e_{i},x_{i+1},\dots, x_{i+m-2},\\
 e_{i+m-2},x_{i+m-1}\}$, where $e_{j}=x_{j}x_{j+1}$ is the edge joining $x_{j}$ and $x_{j+1}$.
 A path of length $m$ may also be denoted $\{x_{i},\dots,x_{i+m-1}\}$, the edges being evident
from the context.
\end{defn}

\begin{defn} \label{def3}Let $G=(V,E)$ be a graph with vertex set  $V=\{x_{1},\dots,x_{n}\}$.  Then the path ideal of  length $m$  associated to $G$ is the
squarefree monomial ideal $I=(x_{i}\cdots x_{i+m-1}\mid \{x_{i},\dots,x_{i+m-1}\}\, \mbox{is a path of length}\,\,  m\,\,
  \mbox{in}\  G)$ of $S$.
\end{defn}

In this paper, we set $n\geq 3$ and consider the $n$-line graph  $L_{n}$ and $n$-cyclic graph  $C_{n}$, their paths ideals  of length $m$ are denoted by $I_{m,n}$ and  $J_{m,n}$
respectively. Thus we obtain that $$I_{m,n}=(x_{i}\cdots x_{i+m-1}\mid 1\leq i\leq n-m+1),$$ and  $$J_{m,n}=I_{m,n}+(x_{n-m}\cdots x_{n}x_{1}, x_{n-m+1}\cdots x_{n}x_{1}x_{2},\dots,x_{n}x_{1}\cdots x_{m-1}).$$

\begin{defn} \label{def4} Let $(S,\frak{m})$ be a local ring (or a Noetherian graded ring with $(S_{0},\frak{m}_{0})$ local),
 $M$ a finite generated $S$-module with the property that $\frak{m}M\subsetneq M$ ( or  a finite generated graded $S$-module with the property that $(\frak{m}_{0}\oplus \bigoplus\limits_{i=1}^{\infty}S_{i})M\subsetneq M$).
 Then the depth of $M$,  is defined as
$$\mbox{depth}\,(M)=\mbox{min}\{i \mid \mbox{Ext}\,^{i}(S/\frak{m},M)\neq 0\}$$
(or
$\mbox{depth}\,(M)=\mbox{min}\{i \mid \mbox{Ext}\,^{i}(S/(\frak{m}_{0}\oplus \bigoplus\limits_{i=1}^{\infty}S_{i}),M)\neq 0\}$).
\end{defn}

\vspace{3mm}We recall the well known Depth Lemma, see for instance \cite[\, Lemma 1.3.9\,]{V2}  or \cite[\, Lemma 3.1.4\,]{V1}.

\begin{lemma}
\label{lem1}  (Depth Lemma) Let $0\rightarrow L\rightarrow M\rightarrow N \rightarrow 0$ be a short exact sequence of
modules over a local ring $S$, or a Noetherian graded ring with $S_{0}$ local, then
\begin{itemize}
 \item[(i)] $\mbox{depth}\,(M)\geq \mbox{min}\{\mbox{depth}\,(L), \mbox{depth}\,(N)\}$;
\item[(ii)] $\mbox{depth}\,(L)\geq \mbox{min}\{\mbox{depth}\,(M), \mbox{depth}\,(N)+1\}$;
\item[(iii)] $\mbox{depth}\,(N)\geq \mbox{min}\{\mbox{depth}\,(L)-1, \mbox{depth}\,(M)\}$.
\end{itemize}
\end{lemma}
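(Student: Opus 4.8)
The plan is to argue directly from the cohomological definition of depth recalled in Definition \ref{def4}, namely that $\mbox{depth}\,(M)$ is the least index $i$ with $\mbox{Ext}^{i}(S/\frak{m},M)\neq 0$ (where $\frak{m}$ is the maximal ideal, or its irrelevant analogue in the graded case). The only tool I need is the long exact sequence in $\mbox{Ext}$ obtained by applying the left-exact functor $\mbox{Hom}(S/\frak{m},-)$ to the given short exact sequence $0\rightarrow L\rightarrow M\rightarrow N\rightarrow 0$ and passing to right derived functors. Writing $E^{i}_{L}:=\mbox{Ext}^{i}(S/\frak{m},L)$ and similarly for $M$ and $N$, this sequence reads
$$\cdots \to E^{i}_{L}\to E^{i}_{M}\to E^{i}_{N}\xrightarrow{\ \delta\ }E^{i+1}_{L}\to E^{i+1}_{M}\to\cdots,$$
and all three inequalities will fall out of tracking where these modules first fail to vanish; the $\pm 1$ shifts appearing in (ii) and (iii) are exactly the degree shift carried by the connecting homomorphism $\delta$.

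Concretely, I would set $a=\mbox{depth}\,(L)$, $b=\mbox{depth}\,(M)$, $c=\mbox{depth}\,(N)$, so that $E^{i}_{L}=0$ for $i<a$, $E^{i}_{M}=0$ for $i<b$, and $E^{i}_{N}=0$ for $i<c$. For (i), fix $i<\min\{a,c\}$; then $E^{i}_{L}=0=E^{i}_{N}$, and exactness of the three-term piece $E^{i}_{L}\to E^{i}_{M}\to E^{i}_{N}$ forces $E^{i}_{M}=0$, whence $\mbox{depth}\,(M)\geq\min\{a,c\}$. For (ii), fix $i<\min\{b,c+1\}$; here $i<b$ gives $E^{i}_{M}=0$, while $i<c+1$ gives $i-1<c$ and hence $E^{i-1}_{N}=0$, so exactness at $E^{i}_{L}$ in the piece $E^{i-1}_{N}\to E^{i}_{L}\to E^{i}_{M}$ yields $E^{i}_{L}=0$. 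For (iii), fix $i<\min\{a-1,b\}$; then $E^{i}_{M}=0$ and, since $i+1<a$, also $E^{i+1}_{L}=0$, so exactness at $E^{i}_{N}$ in the piece $E^{i}_{M}\to E^{i}_{N}\to E^{i+1}_{L}$ gives $E^{i}_{N}=0$.

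Since this is a routine diagram chase, I do not expect any genuine obstacle; the only points demanding care are the index bookkeeping around the connecting map, where it is easy to be off by one in (ii) and (iii), and the degenerate situations in which one of the three modules vanishes. In the latter case the corresponding depth is $+\infty$, and one checks that the inequalities remain correct under the convention $\mbox{depth}\,(0)=+\infty$ so that the minima behave as intended. Finally, I would note that the graded statement follows verbatim from the ungraded one, since the long exact sequence of $\mbox{Ext}$ is equally available once $\frak{m}$ is replaced by the irrelevant maximal ideal.
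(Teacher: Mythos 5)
Your proof is correct, and it is exactly the standard argument: the paper itself gives no proof of this lemma, simply recalling it from \cite[Lemma 1.3.9]{V2} and \cite[Lemma 3.1.4]{V1}, where the same long exact sequence of $\mbox{Ext}(S/\frak{m},-)$ together with the index bookkeeping you carry out is the proof. Your handling of the connecting-map shifts in (ii) and (iii) and of the degenerate convention $\mbox{depth}\,(0)=+\infty$ is accurate, so there is nothing to add.
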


\vspace{3mm}The most of the statements of the Depth Lemma are wrong if we replace depth by stanley depth. Some counter examples are given in
\cite[\, Example 2.5 and   2.6\,]{R2}.
Rauf \cite{R2} proved the analog of Lemma \ref{lem1} (i) for stanley depth.

\begin{lemma}
\label{lem2} Let $0\rightarrow L\rightarrow M\rightarrow N \rightarrow 0$ be a short exact sequence of finitely generated $\mathbb{Z}^{n}$-graded $S$-modules. Then
$$\mbox{sdepth}\,(M)\geq \mbox{min}\{\mbox{sdepth}\,(L), \mbox{sdepth}\,(N)\}.$$
\end{lemma}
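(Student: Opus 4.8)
The plan is to build an explicit Stanley decomposition of $M$ out of Stanley decompositions of $L$ and $N$ that attain their respective Stanley depths, and then simply read off the Stanley depth of the resulting decomposition. Throughout I regard $L$ as a $\mathbb{Z}^{n}$-graded submodule of $M$ via the given injection, and I write $\pi\colon M\rightarrow N$ for the surjection, a degree-preserving map with $\ker\pi=L$.

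First I would fix a Stanley decomposition $\mathcal{D}_{L}\colon L=\bigoplus_{i=1}^{r}u_{i}K[Z_{i}]$ with $\mbox{sdepth}\,(\mathcal{D}_{L})=\mbox{sdepth}\,(L)$ and a Stanley decomposition $\mathcal{D}_{N}\colon N=\bigoplus_{j=1}^{s}w_{j}K[W_{j}]$ with $\mbox{sdepth}\,(\mathcal{D}_{N})=\mbox{sdepth}\,(N)$. Because $\pi$ is a graded surjection and each $w_{j}$ is homogeneous, I can choose for every $j$ a homogeneous preimage $\tilde{w}_{j}\in M$ with $\pi(\tilde{w}_{j})=w_{j}$ and $\deg\tilde{w}_{j}=\deg w_{j}$. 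The candidate decomposition is then
$$\mathcal{D}\colon M=\bigoplus_{i=1}^{r}u_{i}K[Z_{i}]\ \oplus\ \bigoplus_{j=1}^{s}\tilde{w}_{j}K[W_{j}],$$
and I claim this is a genuine Stanley decomposition of $M$.

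The verification splits into three checks. For the \emph{Stanley space property} of each $\tilde{w}_{j}K[W_{j}]$: since $\pi$ is $S$-linear, $\pi(\tilde{w}_{j}v)=w_{j}v$ for every monomial $v\in K[W_{j}]$, and the elements $w_{j}v$ are $K$-linearly independent because $w_{j}K[W_{j}]$ is free; hence the $\tilde{w}_{j}v$ are $K$-linearly independent too, so $\tilde{w}_{j}K[W_{j}]$ is a free $K[W_{j}]$-module of dimension $|W_{j}|$. For \emph{spanning}: given $m\in M$, expand $\pi(m)=\sum_{j}w_{j}g_{j}$ uniquely in $\mathcal{D}_{N}$, put $m'=\sum_{j}\tilde{w}_{j}g_{j}$, and note $m-m'\in\ker\pi=L$, so $m-m'$ expands in $\mathcal{D}_{L}$; adding the two expansions recovers $m$. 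For \emph{directness}: if $\sum_{i}u_{i}f_{i}+\sum_{j}\tilde{w}_{j}g_{j}=0$, applying $\pi$ kills the $L$-part and forces $\sum_{j}w_{j}g_{j}=0$, whence all $g_{j}=0$ by directness of $\mathcal{D}_{N}$; the remaining relation $\sum_{i}u_{i}f_{i}=0$ then gives all $f_{i}=0$ by directness of $\mathcal{D}_{L}$.

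Finally I would conclude by computing $\mbox{sdepth}\,(\mathcal{D})=\min\{\min_{i}|Z_{i}|,\ \min_{j}|W_{j}|\}=\min\{\mbox{sdepth}\,(L),\mbox{sdepth}\,(N)\}$, so that $\mbox{sdepth}\,(M)\geq\mbox{sdepth}\,(\mathcal{D})$ yields the claim. I expect the only genuinely delicate point to be the grading bookkeeping: one must insist that each lift $\tilde{w}_{j}$ be chosen homogeneous of the correct multidegree, since otherwise $\tilde{w}_{j}K[W_{j}]$ need not be a $\mathbb{Z}^{n}$-graded subspace and the decomposition could fail to respect the grading. Everything else is formal manipulation with the two given decompositions.
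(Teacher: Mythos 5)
Your argument is correct. The paper does not prove this lemma itself---it quotes it from Rauf \cite{R2}---and your construction (choosing homogeneous preimages $\tilde{w}_{j}$ of the Stanley generators of $N$, adjoining the resulting spaces $\tilde{w}_{j}K[W_{j}]$ to a Stanley decomposition of $L$, and checking freeness, spanning and directness by pushing forward along $\pi$) is precisely the standard proof given there, including the one delicate point you flag, namely that the lifts can be taken homogeneous of the right multidegree because the surjection is degree-preserving.
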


\vspace{3mm}In  \cite{C1},  Cimpoea{s} computed depth and Stanley depth  for $S/J_{n,2}$.
\begin{lemma}
\label{lem3} \begin{itemize}
\item[(1)] $\mbox{depth}\,(S/J_{n,2})=\lceil \frac{n-1}{3}\rceil$;
\item[(2)] $\mbox{sdepth}\,(S/J_{n,2})=\lceil \frac{n-1}{3}\rceil$ for $n\equiv 0\,(\mbox{mod}\ 3)$  or $n\equiv 2\,(\mbox{mod}\ 3)$;
\item[(3)] $\lceil \frac{n-1}{3}\rceil \leq \mbox{sdepth}\,(S/J_{n,2})\leq \lceil \frac{n}{3}\rceil$ for $n\equiv 1\,(\mbox{mod}\ 3)$.
\end{itemize}
\end{lemma}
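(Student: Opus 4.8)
\emph{Proof proposal.} The plan is to cut the cycle at a single vertex. Observe first that $J_{n,2}$ is exactly the edge ideal of $C_n$, namely $J_{n,2}=(x_1x_2,\dots,x_{n-1}x_n,x_nx_1)$. I would build everything on the short exact sequence
$$0\longrightarrow S/(J_{n,2}:x_n)\xrightarrow{\;\cdot\,x_n\;} S/J_{n,2}\longrightarrow S/(J_{n,2},x_n)\longrightarrow 0,$$
and identify the two outer modules with path-ideal quotients. Killing $x_n$ breaks the cycle into the path $x_1,\dots,x_{n-1}$, so $S/(J_{n,2},x_n)$ is the quotient by the length-$2$ path ideal on these $n-1$ vertices. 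On the other hand $(J_{n,2}:x_n)=(x_1,x_{n-1})+($the length-$2$ path ideal on $x_2,\dots,x_{n-2})$, whence $S/(J_{n,2}:x_n)\cong K[x_n]\otimes_K\bigl(K[x_2,\dots,x_{n-2}]/(\text{path ideal})\bigr)$. Feeding the $m=2$ case of the line formula of \cite{C2}, which gives $\mbox{depth}=\mbox{sdepth}=\lceil k/3\rceil$ for the length-$2$ path on $k$ vertices, into these isomorphisms yields $\mbox{depth}=\mbox{sdepth}=\lceil\frac{n-1}{3}\rceil$ for the quotient term, and $\lceil\frac{n-3}{3}\rceil+1=\lceil\frac{n}{3}\rceil$ for the submodule (the extra $+1$ coming from the free variable $x_n$).

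Both lower bounds now drop out at once. Applying Lemma \ref{lem1}(i) to the sequence gives $\mbox{depth}(S/J_{n,2})\ge\min\{\lceil\frac{n}{3}\rceil,\lceil\frac{n-1}{3}\rceil\}=\lceil\frac{n-1}{3}\rceil$, and Lemma \ref{lem2} gives the same bound for $\mbox{sdepth}$. This settles the inequality ``$\ge\lceil\frac{n-1}{3}\rceil$'' in all three items.

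For the exact value of the depth I would not try to push the reverse inequality through the Depth Lemma: the two outer depths differ by at most one, and in the delicate range $n\equiv1\pmod3$ they differ by exactly one, which the Depth Lemma cannot resolve. Instead I would argue topologically. Since $S/J_{n,2}=K[\mathrm{Ind}(C_n)]$ is the Stanley--Reisner ring of the independence complex of the cycle, Hochster's formula gives $\beta_{i,V}(S/J_{n,2})=\dim_K\widetilde H_{\,n-1-i}(\mathrm{Ind}(C_n);K)$ in the top multidegree $V=\{x_1,\dots,x_n\}$, so a single nonvanishing reduced homology group bounds the projective dimension from below and hence, by Auslander--Buchsbaum, the depth from above. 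The homotopy type of the independence complex is classical: writing $k=\lfloor n/3\rfloor$,
$$\mathrm{Ind}(C_n)\simeq\begin{cases} S^{k-1}\vee S^{k-1}, & n=3k,\\ S^{k-1}, & n=3k+1,\\ S^{k}, & n=3k+2,\end{cases}$$
and in each residue class the resulting nonzero Betti number sits precisely in homological degree $n-\lceil\frac{n-1}{3}\rceil$. This forces $\mbox{depth}(S/J_{n,2})\le\lceil\frac{n-1}{3}\rceil$ for every $n$ and completes item (1).

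For the Stanley depth the matching upper bounds cannot come from the sequence, since Lemma \ref{lem2} is one-sided. Here I would pass to the Herzog--Vl\u{a}doiu--Zheng model: $\mbox{sdepth}(S/J_{n,2})$ equals the maximum, over all partitions of the poset of faces of $\mathrm{Ind}(C_n)$ (the independent sets of $C_n$, ordered by inclusion) into intervals $[F,G]$, of the quantity $\min_{[F,G]}|G|$. A direct counting argument on this poset produces interval partitions attaining $\lceil\frac{n-1}{3}\rceil$ and, in the cases $n\equiv0,2\pmod3$, shows that no partition can do better, giving item (2); in the case $n\equiv1\pmod3$ the same analysis only excludes everything above $\lceil\frac{n}{3}\rceil$, leaving the interval of item (3). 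I expect the two genuinely hard points to be exactly these borderline upper bounds: pinning the depth in the class $n\equiv1\pmod3$ (which is why I resort to the homology of $\mathrm{Ind}(C_n)$ rather than to homological bookkeeping), and the combinatorial interval-partition estimate for the Stanley depth, whose intrinsic slack is what keeps item (3) from being an equality.
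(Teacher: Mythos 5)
This lemma is stated in the paper as a quoted result from \cite{C1}; the paper gives no proof of its own, so your proposal can only be judged on its internal merits. The skeleton is sound and is essentially the standard one: the identifications $S/(J_{n,2},x_n)\simeq S_{n-1}/I_{n-1,2}$ and $S/(J_{n,2}:x_n)\simeq \bigl(K[x_2,\dots,x_{n-2}]/I_{n-3,2}\bigr)[x_n]$ are correct, the values $\lceil\frac{n-1}{3}\rceil$ and $\lceil\frac{n}{3}\rceil$ follow from Lemma \ref{lem4}, and Lemmas \ref{lem1} and \ref{lem2} give the lower bound $\lceil\frac{n-1}{3}\rceil$ in all three items. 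Your depth upper bound via Hochster's formula and the homotopy type of $\mathrm{Ind}(C_n)$ (Kozlov's theorem) is also correct, and the degree bookkeeping checks out in all three residue classes; it is heavier machinery than strictly necessary for $n\not\equiv 1\pmod 3$, but it is genuinely needed to pin down the depth when $n\equiv 1\pmod 3$, where the colon bound leaves a gap of one.

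The genuine gap is the Stanley depth upper bound in items (2) and (3). You assert that ``a direct counting argument on this poset \dots shows that no partition can do better,'' but no such argument is given, and this is precisely where the content of (2) lies; as written it is a promissory note, not a proof. Moreover the detour through the Herzog--Vl\u{a}doiu--Zheng poset is unnecessary: the inequality $\mbox{sdepth}\,(S/I)\leq \mbox{sdepth}\,(S/(I:v))$ for a monomial $v\notin I$ (\cite[Proposition 2.7]{C4}, which this paper invokes repeatedly for exactly this purpose) applied to $v=x_n$, together with your own computation $\mbox{sdepth}\,(S/(J_{n,2}:x_n))=\lceil\frac{n}{3}\rceil$, immediately yields $\mbox{sdepth}\,(S/J_{n,2})\leq\lceil\frac{n}{3}\rceil$; since $\lceil\frac{n}{3}\rceil=\lceil\frac{n-1}{3}\rceil$ whenever $n\equiv 0$ or $2\pmod 3$, this closes item (2), and it is exactly the upper bound of item (3) otherwise. (The same fact for depth, \cite[Corollary 1.3]{R2}, would let you dispense with the topological argument except in the class $n\equiv 1\pmod 3$.) Replace the unproved poset claim with this one-line citation and the proof is complete.
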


\vspace{3mm}In  \cite{C2},   Cimpoea{s} computed depth and Stanley depth  for $S/I_{n,m}$, which generalizes \cite[\, Lemma 2.8\,]{M}
 and \cite[\, Lemma 4\,]{S2}.
\begin{lemma}
\label{lem4} $\mbox{sdepth}\,(S/I_{n,m})=\mbox{depth}\,(S/I_{n,m})=n+1-\lfloor \frac{n+1}{m+1}\rfloor-\lceil \frac{n+1}{m+1}\rceil$. In particular,
$\mbox{sdepth}\,(S/I_{n,2})=\mbox{depth}\,(S/I_{n,2})=\lceil \frac{n}{3}\rceil$.\end{lemma}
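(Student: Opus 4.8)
The plan is to establish the displayed equality by induction on $n$, handling depth and Stanley depth in parallel and reducing at each step to path ideals on fewer vertices. Write $f(n)=n+1-\lfloor\frac{n+1}{m+1}\rfloor-\lceil\frac{n+1}{m+1}\rceil$ for the claimed value. A short computation, setting $n+1=q(m+1)+r$, shows that $f$ obeys the recursion $f(n)=f(n-m-1)+(m-1)$; this is the numerical shadow of the geometric step below, in which removing a block of $m+1$ consecutive vertices drops the invariant by $m-1$. The base cases are $n<m$, where $I_{n,m}=0$ and $S/I_{n,m}=S$ has both invariants equal to $n=f(n)$, and $n=m$, where $I_{m,m}=(x_{1}\cdots x_{m})$ is principal so that $S/I_{m,m}$ is a hypersurface with $\mbox{depth}=\mbox{sdepth}=m-1=f(m)$.

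For the inductive step I would kill a single, carefully chosen variable $x_{t}$ and use the short exact sequence
$$0\longrightarrow S/(I_{n,m}:x_{t})\stackrel{\cdot x_{t}}{\longrightarrow} S/I_{n,m}\longrightarrow S/(I_{n,m},x_{t})\longrightarrow 0.$$
The right-hand term is transparent: setting $x_{t}=0$ disconnects the line graph at $x_{t}$, so $S/(I_{n,m},x_{t})\cong (K[x_{1},\dots,x_{t-1}]/I_{t-1,m})\otimes_{K}(K[x_{t+1},\dots,x_{n}]/I_{n-t,m})$, whose depth is $f(t-1)+f(n-t)$ by induction, depth being additive over such $K$-tensor products on disjoint variable sets. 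The colon term $(I_{n,m}:x_{t})$ consists of the generators missing $x_{t}$ together with the ``broken'' paths obtained by deleting $x_{t}$; after splitting off $x_{t}$ as a free variable it becomes a monomial ideal of truncated-path type on fewer vertices, whose depth I would evaluate by a subsidiary induction, showing it equals $f(n)$. Feeding these into Lemma \ref{lem1} yields the exact value: one chooses $t$ according to the residue of $n$ modulo $m+1$ so that either $f(t-1)+f(n-t)$ strictly dominates $f(n)$ (whence parts (i) and (iii) force the middle term to have depth exactly $f(n)$), or the coincidence $\dim(S/I_{n,m})=f(n)$ supplies the needed upper bound.

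For the Stanley depth I would run the same sequences through Lemma \ref{lem2}, using the two standard facts that adjoining a free variable raises sdepth by one and that sdepth is superadditive over $K$-tensor products on disjoint variables; induction then gives the lower bound $\mbox{sdepth}\,(S/I_{n,m})\geq f(n)$, matching the depth. The matching upper bound $\mbox{sdepth}\,(S/I_{n,m})\leq f(n)$ is the genuinely hard part, because Lemma \ref{lem2} supplies only the lower inequality and there is no valid sdepth analogue of the full Depth Lemma. Here I would abandon exact sequences and instead use the Herzog--Vl\u{a}doiu--Zheng correspondence \cite{HVZ}: translating to the characteristic poset of $S/I_{n,m}$, I would exhibit in every partition into intervals an interval whose upper endpoint has at most $f(n)$ free coordinates, forcing every Stanley decomposition to have sdepth $\leq f(n)$.

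The main obstacle is twofold. First, extracting the exact depth (rather than a one-sided bound) from the short exact sequence requires the correct choice of splitting variable in each residue class of $n$ modulo $m+1$, together with the subsidiary induction that evaluates $\mbox{depth}\,(S/(I_{n,m}:x_{t}))$; this case analysis is precisely where the floor/ceiling asymmetry of $f(n)$ is produced. Second, and harder, is the combinatorial upper bound for the Stanley depth, which is invisible to the recursion and must be argued directly on the poset. The specialization $m=2$, namely $\mbox{sdepth}\,(S/I_{n,2})=\mbox{depth}\,(S/I_{n,2})=\lceil \frac{n}{3}\rceil$, is the edge-ideal case already recorded in \cite{M,S2} and serves as a consistency check on both the formula and the recursion.
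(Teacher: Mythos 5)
You are proving a statement that this paper does not prove at all: Lemma \ref{lem4} is quoted verbatim from Cimpoea{s} \cite{C2} (the paper only remarks that it generalizes \cite[Lemma 2.8]{M} and \cite[Lemma 4]{S2}), so there is no in-paper argument to compare yours against. Judged on its own terms, your outline follows the standard inductive strategy for path ideals of line graphs (and your recursion $f(n)=f(n-m-1)+(m-1)$ and base cases check out), but it is a plan rather than a proof, and two of its load-bearing steps are problematic.

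First, the entire depth computation is deferred to a ``subsidiary induction'' showing $\mbox{depth}\,(S/(I_{n,m}:x_{t}))=f(n)$; that colon ideal is not again a path ideal of a line graph (it contains the truncated generators $x_{t-m+1}\cdots \widehat{x_{t}}\cdots x_{t+m-1}$ of mixed lengths), so the induction hypothesis does not apply to it and this step, which is where all the work lies, is simply not carried out. Moreover your fallback upper bound via ``the coincidence $\dim(S/I_{n,m})=f(n)$'' is false in general: already for $n=2m+1$ one has $\operatorname{height}(I_{n,m})=2$, hence $\dim(S/I_{n,m})=2m-1$, while $f(2m+1)=2m-2$, so $S/I_{n,m}$ is not Cohen--Macaulay and that branch of your case analysis cannot close. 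Second, you correctly identify the sdepth upper bound as the hard point but leave it as an unexecuted poset argument in the sense of \cite{HVZ}. The established route --- and the one this very paper uses for its own upper bounds in Corollaries \ref{cor1} and \ref{cor2} --- is much cheaper: apply $\mbox{sdepth}\,(S/I)\leq \mbox{sdepth}\,(S/(I:v))$ from \cite[Proposition 2.7]{C4} (respectively $\mbox{depth}\,(S/I)\leq\mbox{depth}\,(S/(I:v))$ from \cite[Corollary 1.3]{R2}) with a monomial $v$ chosen so that $(I_{n,m}:v)$ is an ideal whose sdepth is known exactly, rather than attacking the characteristic poset directly. As written, the proposal establishes neither the exact depth nor either inequality for sdepth.
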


\vspace{3mm}Using these lemmas, we are able to prove the main result of this section.
\begin{theorem}\label{Thm1}
\begin{itemize}
\item[(1)] $\mbox{depth}\,(S/J_{n,3})\geq n-\lfloor \frac{n}{4}\rfloor-\lceil \frac{n}{4}\rceil$;
\item[(2)] $\mbox{sdepth}\,(S/J_{n,3})\geq n-\lfloor \frac{n}{4}\rfloor-\lceil \frac{n}{4}\rceil$.
\end{itemize}
\end{theorem}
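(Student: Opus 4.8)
The plan is to cut the cycle at a single vertex and to import the line-graph values of Lemma \ref{lem4}. Write $d_n=n-\lfloor\frac n4\rfloor-\lceil\frac n4\rceil$ for the target. Since $C_n$ is vertex transitive I may cut at $x_n$ and use the short exact sequence
$$0\longrightarrow S/(J_{n,3}:x_n)\xrightarrow{\ \cdot x_n\ }S/J_{n,3}\longrightarrow S/(J_{n,3},x_n)\longrightarrow 0.$$
Here $S/J_{n,3}$ sits in the \emph{middle}, so Depth Lemma \ref{lem1}(i) bounds its depth and Lemma \ref{lem2} bounds its Stanley depth, both from below by the minimum of the corresponding values of the two outer modules; this is precisely why one sequence can settle (1) and (2) at once. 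It therefore suffices to show that each outer term has depth and sdepth at least $d_n$.

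The quotient term is immediate: setting $x_n=0$ annihilates exactly the three generators $x_{n-2}x_{n-1}x_n$, $x_{n-1}x_nx_1$, $x_nx_1x_2$ of $J_{n,3}$ that involve $x_n$, so the cycle is cut into a line and $S/(J_{n,3},x_n)\cong K[x_1,\dots,x_{n-1}]/I_{n-1,3}$. By Lemma \ref{lem4} this module has depth and sdepth equal to $d_n$ exactly, so the quotient term is never the bottleneck.

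The colon term is where the work lies. One computes $(J_{n,3}:x_n)=(x_1x_2,\,x_1x_{n-1},\,x_{n-2}x_{n-1})+(x_2x_3x_4,\dots,x_{n-4}x_{n-3}x_{n-2})$, which involves only $x_1,\dots,x_{n-1}$; hence $x_n$ is a free variable and $S/(J_{n,3}:x_n)\cong\bigl(K[x_1,\dots,x_{n-1}]/I''\bigr)[x_n]$, where $I''$ is the length-$3$ line ideal on the interior vertices $x_2,\dots,x_{n-2}$ decorated by the three quadratic corner generators above. Since adjoining a free variable raises both depth and sdepth by exactly $1$, it remains to bound $K[x_1,\dots,x_{n-1}]/I''$ below by $d_n-1$. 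I would peel $x_1$: the colon $(I'':x_1)$ simplifies to $(x_2,x_{n-1})$ together with the shorter line ideal $(x_3x_4x_5,\dots,x_{n-4}x_{n-3}x_{n-2})$, whose quotient is a line-ideal quotient times a free variable and exceeds $d_n-1$ by Lemma \ref{lem4}; while the quotient $(I'',x_1)$ is the length-$3$ line ideal on $x_2,\dots,x_{n-2}$ with a single pendant edge $x_{n-2}x_{n-1}$ attached at its end.

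The technical heart is this ``line-plus-pendant-edge'' family. Writing $M_N=K[y_1,\dots,y_N]\big/\bigl((y_1y_2y_3,\dots,y_{N-3}y_{N-2}y_{N-1})+(y_{N-1}y_N)\bigr)$, I would prove by induction on $N$ that both $\mbox{depth}\,(M_N)$ and $\mbox{sdepth}\,(M_N)$ are at least $\lceil N/2\rceil$; for $N=n-2$ this yields the required lower bound $\ge d_n-1$, since $\lceil n/2\rceil\ge d_n$. The crucial point — and the main obstacle, because a careless choice loses a unit of depth exactly when $n\equiv3\pmod4$ — is to peel the \emph{junction} vertex $y_{N-1}$ rather than the pendant tip $y_N$: putting $y_{N-1}=0$ leaves a line ideal together with a free variable, while the colon $(\,\cdot:y_{N-1})$ reproduces, up to a free variable, the smaller member $M_{N-2}$. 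Both outer terms then meet $\lceil N/2\rceil$, so Depth Lemma \ref{lem1}(i) and Lemma \ref{lem2} close the induction; by contrast, peeling the tip $y_N$ returns the line ideal $I_{N-1,3}$, whose depth falls one below $\lceil N/2\rceil$ in the offending residue class, which is exactly the trap to avoid. Because every reduction above uses only the part-(i) inequality of the Depth Lemma and its Stanley analog Lemma \ref{lem2} — never the (ii)/(iii) forms that are false for Stanley depth — the depth bound (1) and the Stanley depth bound (2) drop out of one and the same argument.
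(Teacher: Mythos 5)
Your argument is correct, but it takes a genuinely different route from the paper's. The paper peels a whole chain of roughly $n/4$ variables ($x_n, x_4, x_8,\dots$), producing a telescoping sequence of short exact sequences whose terminal colon ideal $L_k$ is isomorphic to the edge ideal $J_{n-k,2}$ of a smaller \emph{cycle}; the argument therefore rests on Lemma \ref{lem3} together with an identification of each intermediate $S/U_{j+1}$ as a tensor product of two line-ideal quotients. You instead make only two cuts (at $x_n$, then at $x_1$), after which every piece is either a line path ideal covered by Lemma \ref{lem4} or a member of your auxiliary ``line plus pendant edge'' family $M_N$, which you handle by a self-contained induction $N\mapsto N-2$; Lemma \ref{lem3} is never needed. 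I verified the key points: $(J_{n,3}:x_n)$, $(I'':x_1)$ and $(I'',x_1)$ are as you describe; $\lceil n/2\rceil\ge n-\lfloor\frac n4\rfloor-\lceil\frac n4\rceil$ holds in every residue class; and in the $M_N$ induction both outer terms of the sequence at the junction vertex $y_{N-1}$ do reach $\lceil N/2\rceil$, whereas peeling the tip would indeed fall one short exactly when $N\equiv 1\ (\mbox{mod}\ 4)$, i.e.\ $n\equiv 3\ (\mbox{mod}\ 4)$ --- your identification of that trap is the essential insight. Two small repairs: ``exceeds $d_n-1$'' should read ``is at least $d_n-1$'', since equality occurs (e.g.\ for $n\equiv 0\ (\mbox{mod}\ 4)$), and you should record the degenerate base cases ($M_2$, $M_3$, and the small $n$ for which some listed generators collapse), though these are routine. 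As for what each approach buys: yours is shorter and avoids importing the cyclic edge-ideal computation, while the paper's longer chain also determines $\mbox{sdepth}\,(S/L_k)$ and $\mbox{depth}\,(S/L_k)$ exactly, which is precisely the input to the upper bounds of Corollaries \ref{cor1} and \ref{cor2}; with your single cut those upper bounds would require a separate argument.
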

 \begin{proof}  These two results can be shown by similar arguments, so we only prove that
$\mbox{sdepth}\,(S/J_{n,3})\geq n-\lfloor \frac{n}{4}\rfloor-\lceil \frac{n}{4}\rceil$.
 Let $S_{t}$ be the polynomial ring
 in $t$ variables over a field.  The case $n=3$ is trivial.
The cases  $n=4$ and  $n=5$ follow from Examples \ref{exam1}  and \ref{exam2} respectively.

We may assume that $n\geq 6$.
 Let $k=\lfloor \frac{n}{4}\rfloor$ and $\varphi(n)=n-\lfloor \frac{n}{4}\rfloor-\lceil \frac{n}{4}\rceil$. One can easily see that
 $$\varphi(n)=\left\{\begin{array}{ll}
n-2k,&\mbox{if}\ n\equiv0\,(\mbox{mod}\ 4);\\
n-2k+1,&\mbox{otherwise}.\\
\end{array}\right.$$
We denote $u_{i}=x_{i}x_{i+1}x_{i+2}$ for $1\leq i\leq n-2$, $u_{n-1}=x_{n-1}x_{n}x_{1}$  and  $u_{n}=x_{n}x_{1}x_{2}$. Set  $L_{0}=J_{n,3}$,
$L_{1}=(L_{0}:x_{n})$ and  $U_{1}=(L_{0},x_{n})$.
Notice  that  $L_{0}=(u_{1},\dots,u_{n})$, $L_{1}=(u_{2},\dots,u_{n-4}, \frac{u_{n-2}}{x_{n}}, \frac{u_{n-1}}{x_{n}}, \frac{u_{n}}{x_{n}})$ and
$U_{1}=(u_{1},\dots,u_{n-3},x_{n})$. Since
$S/U_{1}\simeq S_{n-1}/I_{n-1,3}$, we obtain  $\mbox{sdepth}\,(S/U_{1})=\varphi(n)$ by Lemma \ref{lem4}.

We set  $L_{j+1}=(L_{j}:x_{4j})$ and $U_{j+1}=(L_{j},x_{4j})$ where $1\leq j\leq k-3$. One can easily check that:
$$L_{j+1}=(\frac{u_{2}}{x_{4}},\frac{u_{3}}{x_{4}},\frac{u_{4}}{x_{4}},\frac{u_{6}}{x_{8}},\dots,\frac{u_{4(j-1)}}{x_{4(j-1)}},\frac{u_{4j-2}}{x_{4j}},\dots,\frac{u_{4j}}{x_{4j}},
u_{4j+2},\dots,u_{n-4}, \frac{u_{n-2}}{x_{n}}, \frac{u_{n-1}}{x_{n}}, \frac{u_{n}}{x_{n}}),$$
 and
 $$U_{j+1}=(\frac{u_{2}}{x_{4}},\frac{u_{3}}{x_{4}},\frac{u_{4}}{x_{4}},\frac{u_{6}}{x_{8}},\dots,\frac{u_{4(j-1)-2}}{x_{4(j-1)}},\dots,\frac{u_{4(j-1)}}{x_{4(j-1)}}, x_{4j},
u_{4j+1},\dots,u_{n-4}, \frac{u_{n-2}}{x_{n}}, \frac{u_{n-1}}{x_{n}}, \frac{u_{n}}{x_{n}}),$$
where $x_{0}=1$ and $u_{j}=0$ for $j\leq 0$.

We consider the following three cases:

(1). If  $n=4k$ or  $n=4k-1$,  we denote $L_{j+1}=(L_{j}:x_{4j})$ and $U_{j+1}=(U_{j},x_{4j})$ for $j=k-2, k-1$. We conclude
 $L_{k}\simeq J_{n-k,2}S$, $U_{k}=(x_{4(k-1)},V_{k})$ where $V_{k}=(\frac{u_{2}}{x_{4}}, \frac{u_{3}}{x_{4}},\frac{u_{4}}{x_{4}}, \frac{u_{6}}{x_{8}},\dots,\frac{u_{4(k-2)}}{x_{4(k-2)}}, \frac{u_{n-2}}{x_{n}},\frac{u_{n-1}}{x_{n}},\frac{u_{n}}{x_{n}})$.
 Note that $$V_{k}\simeq \left\{\begin{array}{ll}
I_{n-k-2,2},&\mbox{if}\ n=4k;\\
I_{n-k-1,2},&\mbox{if}\ n=4k-1.\\
\end{array}\right.
$$
Thus, by Lemmas
\ref{lem3}, \ref{lem4} and \cite[Lemma  3.6]{HVZ}, it follows that
$$\mbox{sdepth}\,(S/L_{k})=k+\mbox{sdepth}\,(S_{n-k}/J_{n-k,2})=k+k=\varphi(n),$$
and
$$\mbox{sdepth}\,(\frac{S}{U_{k}})= \left\{\begin{array}{lr}
(k+1)+\mbox{sdepth}\,(\frac{S_{n-k-2}}{I_{n-k-2,2}})=(k+1)+k=1+\varphi(n),\hspace{-5mm}&\mbox{if}\ n=4k;\\
k+\mbox{sdepth}\,(\frac{S_{n-k-1}}{I_{n-k-1,2}})=k+k=\varphi(n),&\mbox{if}\ n=4k-1.\\
\end{array}\right.$$

 (2). If $n=4k-2$, we denote $L_{k-1}=(L_{k-2}:x_{4(k-2)})$,  $U_{k-1}=(L_{k-2},x_{4(k-2)})$,
$L_{k}=(L_{k-1}:x_{4(k-1)-1})$ and  $U_{k}=(U_{k-1},x_{4(k-1)-1})$. We have  $L_{k}\simeq J_{n-k,2}S$, $U_{k}=(x_{4(k-1)-1},V_{k})$  where $V_{k}=(\frac{u_{2}}{x_{4}}, \frac{u_{3}}{x_{4}},\frac{u_{4}}{x_{4}}, \frac{u_{6}}{x_{8}},\dots,\frac{u_{4(k-2)}}{x_{4(k-2)}}, \frac{u_{n-2}}{x_{n}},\frac{u_{n-1}}{x_{n}},\frac{u_{n}}{x_{n}})\simeq I_{n-k,2}S$.
Thus, by Lemma \ref{lem4} and \cite[Lemma  3.6]{HVZ}, we obtain
$$\mbox{sdepth}\,(S/U_{k})=(k-1)+\mbox{sdepth}\,(S_{n-k}/I_{n-k,2})=(k-1)+k=\varphi(n).$$
Applying  Lemma \ref{lem3} and \cite[Lemma  3.6]{HVZ}, we get
$$\mbox{sdepth}\,(S/L_{k})=k+\mbox{sdepth}\,(S_{n-k}/J_{n-k,2})\geq k+(k-1)=\varphi(n),$$
and
$$\mbox{sdepth}\,(S/L_{k})=k+\mbox{sdepth}\,(S_{n-k}/J_{n-k,2})\leq k+k=1+\varphi(n).$$

(3). If $n=4k-3$, we denote $L_{k-1}=(L_{k-2}:x_{4(k-2)-1})$,  $U_{k-1}=(L_{k-2},x_{4(k-2)-1})$,
$L_{k}=(L_{k-1}:x_{4(k-1)-2})$ and  $U_{k}=(U_{k-1},x_{4(k-1)-2})$.  We have   $L_{k}\simeq J_{n-k,2}S$, \\
$U_{k}=(x_{4(k-1)-2},V_{k})$  where $V_{k}=(\frac{u_{2}}{x_{4}}, \frac{u_{3}}{x_{4}},\frac{u_{4}}{x_{4}}, \frac{u_{6}}{x_{8}},\dots,\frac{u_{4(k-3)}}{x_{4(k-3)}},\frac{u_{4(k-2)-2}}{x_{4(k-2)-1}},\frac{u_{4(k-2)-1}}{x_{4(k-2)-1}}, \frac{u_{n-2}}{x_{n}},\\
\frac{u_{n-1}}{x_{n}}, \frac{u_{n}}{x_{n}})\simeq I_{n-k,2}S$.
Therefore, by Lemmas
\ref{lem3}, \ref{lem4} and \cite[Lemma  3.6]{HVZ}, we have
$$\mbox{sdepth}\,(S/L_{k})=k+\mbox{sdepth}\,(S_{n-k}/J_{n-k,2})= k+(k-1)=1+\varphi(n),$$
and
$$\mbox{sdepth}\,(S/U_{k})=(k-1)+\mbox{sdepth}\,(S_{n-k}/I_{n-k,2})=(k-1)+(k-1)=\varphi(n).$$
This shows that $\varphi(n)\leq \mbox{sdepth}\,(S/L_{k})\leq 1+\varphi(n)$ and $\mbox{sdepth}\,(S/U_{k})\geq \varphi(n) \hspace{0.5cm}(\ast)$.

 Consider the  following short exact sequences

$$\begin{array}{c}
0 \longrightarrow  \frac{S}{L_{1}}  \longrightarrow \frac{S}{L_{0}}  \longrightarrow    \frac{S}{U_{1}} \longrightarrow  0 \\

 \\
0 \longrightarrow  \frac{S}{L_{2}}  \longrightarrow \frac{S}{L_{1}}  \longrightarrow    \frac{S}{U_{2}} \longrightarrow  0 \\

 \\
 \vdots \hspace{15mm} \vdots \hspace{15mm}\vdots  \\

 \\
 0 \rightarrow  \frac{S}{L_{k-1}}  \rightarrow \frac{S}{L_{k-2}}  \rightarrow    \frac{S}{U_{k-1}} \rightarrow  0 \\

 \\
 0 \longrightarrow  \frac{S}{L_{k}}  \longrightarrow \frac{S}{L_{k-1}}  \longrightarrow    \frac{S}{U_{k}} \longrightarrow  0 \\
\end{array}$$

By Lemma \ref{lem2} and $(\ast)$, we have
\begin{eqnarray*}\mbox{sdepth}\,(\frac{S}{J_{n,3}})&=&\mbox{sdepth}\,(\frac{S}{L_{0}})\geq \mbox{min}\{\mbox{sdepth}\,(\frac{S}{L_{1}}), \mbox{sdepth}\,(\frac{S}{U_{1}})\}\\
&\geq &\mbox{min}\{\mbox{sdepth}\,(\frac{S}{L_{2}}),\mbox{sdepth}\,(\frac{S}{U_{2}}), \mbox{sdepth}\,(\frac{S}{U_{1}})\}\\
&\geq &\cdots\\
\end{eqnarray*}
\begin{eqnarray*}&\geq &\mbox{min}\{\mbox{sdepth}\,(\frac{S}{L_{k}}),\mbox{sdepth}\,(\frac{S}{U_{k}}),\mbox{sdepth}\,(\frac{S}{U_{k-1}}),\dots, \mbox{sdepth}\,(\frac{S}{U_{1}})\}\\
&\geq &\mbox{min}\{\varphi(n),\mbox{sdepth}\,(\frac{S}{U_{k-1}}),\dots, \mbox{sdepth}\,(\frac{S}{U_{2}}),\mbox{sdepth}\,(\frac{S}{U_{1}})\}
\end{eqnarray*}
To show $\mbox{sdepth}\,(\frac{S}{J_{n,3}})\geq \varphi(n)$ it is enough to prove the claim below.

Claim: $\mbox{sdepth}\,(S/U_{j+1})\geq \varphi(n)$ for all $1\leq j\leq k-2$.

For any $1\leq j\leq k-3$, we set  $V_{j+1}=(\frac{u_{n-2}}{x_{n}},\frac{u_{n-1}}{x_{n}},\frac{u_{n}}{x_{n}},\frac{u_{2}}{x_{4}},\dots, \frac{u_{4(j-1)-2}}{x_{4(j-1)}},\dots, \frac{u_{4(j-1)}}{x_{4(j-1)}})$ and $W_{j+1}=(u_{4j+1}, \dots, u_{n-4})$  where $x_{0}=1$ and $u_{j}=0$ for $j\leq 0$.  We have   $\frac{S}{U_{j+1}}\simeq \frac{S/V_{j+1}\oplus S/W_{j+1}}{x_{4j}(S/V_{j+1}\oplus S/W_{j+1})}$.
Since  $x_{4j}$ is regular on $S/V_{j+1}\oplus S/W_{j+1}$, by  \cite[Theorem  1.1]{R1} and \cite[Theorem  1.3]{C4}, we have $$\mbox{sdepth}\,(\frac{S}{U_{j+1}})=\mbox{sdepth}\,(\frac{S}{V_{j+1}}\oplus \frac{S}{W_{j+1}})-1\geq \mbox{sdepth}\,(\frac{S}{V_{j+1}})+\mbox{sdepth}\,(\frac{S}{W_{j+1}})-n-1.$$

On the other hand, $V_{j+1}\simeq I_{3j+1,2}S$, $W_{j+1}\simeq I_{n-4(j+1)+2,3}S$. Thus, by Lemma \ref{lem4}, we have
$$\mbox{sdepth}\,(S/V_{j+1})=[n-(3j+1)]+\lceil \frac{3j+1}{3}\rceil=n-2j$$ and
 \begin{eqnarray*}\mbox{sdepth}\,(S/W_{j+1})&=&[4(j+1)-2]+[n-4(j+1)+3]-\lfloor \frac{n-4(j+1)+3}{4}\rfloor\\
 &-&\lceil \frac{n-4(j+1)+3}{4}\rceil\\
 &=&n+1-\lfloor \frac{n-4j-1}{4}\rfloor-\lceil \frac{n-4j-1}{4}\rceil\\
 &=&n+1+2j-\lfloor \frac{n-1}{4}\rfloor-\lceil \frac{n-1}{4}\rceil.
 \end{eqnarray*}

 By some simple computations, we conclude that   \begin{eqnarray*}\mbox{sdepth}\,(S/U_{j+1})&\geq & (n-2j)+(n+1+2j)-\lfloor \frac{n-1}{4}\rfloor-\lceil \frac{n-1}{4}\rceil-n-1\\
 &=&n-\lfloor \frac{n-1}{4}\rfloor-\lceil \frac{n-1}{4}\rceil\geq \varphi(n).
  \end{eqnarray*}

  If $n\neq 4k-3$, we have    $V_{k-1}=(\frac{u_{n-2}}{x_{n}},\frac{u_{n-1}}{x_{n}},\frac{u_{n}}{x_{n}},\frac{u_{2}}{x_{4}},\dots, \frac{u_{4(k-3)-2}}{x_{4(k-3)}},\dots, \frac{u_{4(k-3)}}{x_{4(k-3)}})$ and $W_{k-1}=(u_{4(k-2)+1}, \dots, u_{n-4})$. It follows from  similar arguments as above.

   If $n=4k-3$, we have  $V_{k-1}=(\frac{u_{n-2}}{x_{n}},\frac{u_{n-1}}{x_{n}},\frac{u_{n}}{x_{n}},\frac{u_{2}}{x_{4}},\dots, \frac{u_{4(k-3)-2}}{x_{4(k-3)}},\dots, \frac{u_{4(k-3)}}{x_{4(k-3)}})$
and $W_{k-1}=(u_{4(k-2)}, \dots, u_{n-4})$.  Note that
 $V_{k-1}\simeq I_{3(k-2)+1,2}S$ and $W_{k-1}\simeq I_{n-4(k-1)+3,3}S$.
  Thus, by Lemma \ref{lem4}, we obtain
$$\mbox{sdepth}\,(S/V_{k-1})=(n-(3(k-2)+1))+\lceil \frac{3(k-2)+1}{3}\rceil=n-2k+4=2k+1$$ and
 \begin{eqnarray*}\mbox{sdepth}\,(S/W_{k-1})&=&(4(k-1)-3)+(n-4(k-1)+4)-\lfloor \frac{n-4(k-1)+4}{4}\rfloor\\
 &-&\lceil \frac{n-4(k-1)+4}{4}\rceil\\
  &=& n+1-\lfloor \frac{n-4k+8}{4}\rfloor-\lceil \frac{n-4k+8}{4}\rceil\\
 &=& n+1-\lfloor \frac{5}{4}\rfloor-\lceil \frac{5}{4}\rceil=n-2. \end{eqnarray*}

One can easily see that $\frac{S}{U_{k-1}}\simeq \frac{S/V_{k-1}\oplus S/W_{k-1}}{x_{4(k-2)-1}(S/V_{k-1}\oplus S/W_{k-1})}$.
Since  $x_{x_{4(k-2)-1}}$ is regular on $S/V_{k-1}\oplus S/W_{k-1}$, by  \cite[Theorem  1.1]{R1} and \cite[Theorem  1.3]{C4}, we have
\begin{eqnarray*}
\mbox{sdepth}\,(\frac{S}{U_{k-1}})&=&\mbox{sdepth}\,(\frac{S}{V_{k-1}}\oplus \frac{S}{W_{k-1}})-1\\
&\geq& \mbox{sdepth}\,(\frac{S}{V_{k-1}})+\mbox{sdepth}\,(\frac{S}{W_{k-1}})-n-1
\\&=&2k+1+(n-2)-n-1\\
&=&2k-2=\varphi(n).
 \end{eqnarray*}
This completes the proof.
\end{proof}

\vspace{3mm}\begin{example} \label{exam1} Let $J_{4,3}=(x_{1}x_{2}x_{3},x_{2}x_{3}x_{4},x_{3}x_{4}x_{1},x_{4}x_{1}x_{2})\subset S=K[x_{1},\dots,x_{4}]$. Note
that $4-\lfloor \frac{4}{4}\rfloor-\lceil \frac{4}{4}\rceil=2$. Set $L_{1}=(J_{4,3}:x_{4})$
and $U_{1}=(J_{4,3},x_{4})$. Since $L_{1}=(x_{1}x_{2},x_{2}x_{3},x_{3}x_{1})=J_{3,2}S$ and  $U_{1}=(x_{1}x_{2}x_{3},x_{4})$. Thus $S/U_{1}=K[x_{1},x_{2},x_{3}]/(x_{1}x_{2}x_{3})$.
By Lemmas \ref{lem3},  \ref{lem4} and \cite[Lemma  3.6]{HVZ}, we have
$\mbox{sdepth}\,(S/L_{1})=\mbox{depth}\,(S/L_{1})=1+\lceil \frac{3-1}{3}\rceil=2$  and $\mbox{sdepth}\,(S/U_{1})=2$. Applying Lemma \ref{lem2} on the  short exact sequence
$$ 0\longrightarrow S/L_{1}\longrightarrow S/J_{4,3}\longrightarrow S/U_{1}\longrightarrow 0,$$
we obtain   $\mbox{depth}\,(\frac{S}{J_{4,3}})=2$ and $\mbox{sdepth}\,(\frac{S}{J_{4,3}})\geq 2$. By \cite[Proposition 2.7]{C4}, it follows that
$\mbox{sdepth}\,(\frac{S}{J_{4,3}})\leq \mbox{sdepth}\,(S/L_{1})=2$. Thus $\mbox{sdepth}\,(S/J_{4,3})=2$.
\end{example}

\begin{example} \label{exam2} Let $J_{5,3}=(x_{1}x_{2}x_{3},x_{2}x_{3}x_{4},x_{3}x_{4}x_{5},x_{4}x_{5}x_{1},x_{5}x_{1}x_{2})\subset S=K[x_{1},\dots,x_{5}]$. Note
that $5-\lfloor \frac{5}{4}\rfloor-\lceil \frac{5}{4}\rceil=2$. Set $L_{1}=(J_{5,3}:x_{5})$
and $U_{1}=(J_{5,3},x_{5})$. Since $L_{1}=(x_{3}x_{4},x_{4}x_{1},x_{1}x_{2})\simeq I_{4,2}S$ and  $U_{1}=(x_{1}x_{2}x_{3},x_{2}x_{3}x_{4},x_{5})$. Thus
 $S/U_{1}= S_{4}/I_{4,3}$, by Lemma \ref{lem4} and  \cite[Lemma  3.6]{HVZ}, we have
$\mbox{sdepth}\,(S/L_{1})=\mbox{depth}\,(S/L_{1})=1+\lceil \frac{4}{3}\rceil=3$  and $\mbox{sdepth}\,(S/U_{1})=5-\lfloor \frac{5}{4}\rfloor-\lceil \frac{5}{4}\rceil=2$. Using Lemmas \ref{lem1} and \ref{lem2} on the  short exact sequence
$$ 0\longrightarrow S/L_{1}\longrightarrow S/J_{5,3}\longrightarrow S/U_{1}\longrightarrow 0,$$
we obtain $\mbox{depth}\,(S/J_{5,3})\geq 2$ and $\mbox{sdepth}\,(S/J_{5,3})\geq 2$.
\end{example}

 \vspace{3mm}As a consequence of Theorem \ref{Thm1}, one has the following results.
\begin{coro} \label{cor1}
\begin{itemize}
\item[(1)] $\mbox{sdepth}\,(S/J_{n,3})\leq n+1-\lfloor \frac{n}{4}\rfloor-\lceil \frac{n}{4}\rceil$ for $n\equiv 1\,(\mbox{mod}\ 4)$  or $n\equiv 2\,(\mbox{mod}\ 4)$;\\
\item[(2)]
$\mbox{sdepth}\,(S/J_{n,3})=n-\lfloor \frac{n}{4}\rfloor-\lceil \frac{n}{4}\rceil$ for $n\equiv 0\,(\mbox{mod}\ 4)$ or $n\equiv 3\,(\mbox{mod}\ 4)$.
\end{itemize}
\end{coro}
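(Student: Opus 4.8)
The plan is to read both statements off Theorem \ref{Thm1} by producing a matching upper bound for $\mbox{sdepth}\,(S/J_{n,3})$. Since Theorem \ref{Thm1}(2) already furnishes $\mbox{sdepth}\,(S/J_{n,3})\geq \varphi(n)$ with $\varphi(n)=n-\lfloor \frac{n}{4}\rfloor-\lceil \frac{n}{4}\rceil$, the entire task reduces to bounding this invariant from above, and the natural source for such a bound is the chain of colon ideals $L_{0}=J_{n,3},L_{1},\dots,L_{k}$ built in the proof of Theorem \ref{Thm1}.

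The key tool I would invoke is the monotonicity $\mbox{sdepth}\,(S/I)\leq \mbox{sdepth}\,(S/(I:x))$, valid for any monomial ideal $I$ and any variable $x$; this is \cite[Proposition 2.7]{C4}, already used in Example \ref{exam1}. In the proof of Theorem \ref{Thm1} every $L_{j+1}$ arises as $(L_{j}:x)$ for a single variable $x$ --- namely $x_{n}$ for the first step, $x_{4j}$ for the generic steps, and one of the shifted variables $x_{4(k-1)-1}$ or $x_{4(k-1)-2}$ for the final step in cases (2) and (3). Applying the above inequality at each link and composing yields
$$\mbox{sdepth}\,(S/J_{n,3})=\mbox{sdepth}\,(S/L_{0})\leq \mbox{sdepth}\,(S/L_{1})\leq \cdots \leq \mbox{sdepth}\,(S/L_{k}).$$

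It then suffices to substitute the values of $\mbox{sdepth}\,(S/L_{k})$ already computed in Theorem \ref{Thm1}. For $n\equiv 0\,(\mbox{mod}\ 4)$ (that is $n=4k$) and for $n\equiv 3\,(\mbox{mod}\ 4)$ (that is $n=4k-1$), case (1) there gives $\mbox{sdepth}\,(S/L_{k})=\varphi(n)$, so the chain delivers $\mbox{sdepth}\,(S/J_{n,3})\leq \varphi(n)$; combined with Theorem \ref{Thm1}(2) this pins down $\mbox{sdepth}\,(S/J_{n,3})=n-\lfloor \frac{n}{4}\rfloor-\lceil \frac{n}{4}\rceil$, which is part (2). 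For $n\equiv 2\,(\mbox{mod}\ 4)$ (that is $n=4k-2$) and for $n\equiv 1\,(\mbox{mod}\ 4)$ (that is $n=4k-3$), cases (2) and (3) give $\mbox{sdepth}\,(S/L_{k})\leq 1+\varphi(n)$, so the chain gives $\mbox{sdepth}\,(S/J_{n,3})\leq n+1-\lfloor \frac{n}{4}\rfloor-\lceil \frac{n}{4}\rceil$, which is part (1).

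The remaining small values $n=3,4,5$, for which $k=\lfloor \frac{n}{4}\rfloor$ is too small for the full chain, I would settle separately: for $n=4$ and $n=5$ the inequality $\mbox{sdepth}\,(S/J_{n,3})\leq \mbox{sdepth}\,(S/L_{1})$ of \cite[Proposition 2.7]{C4} together with Examples \ref{exam1} and \ref{exam2} gives the claims, while for $n=3$ one has $J_{3,3}=(x_{1}x_{2}x_{3})$, so $S/J_{3,3}\simeq S_{3}/I_{3,3}$ and Lemma \ref{lem4} yields $\mbox{sdepth}=2=\varphi(3)$. I expect the only real obstacle to be bookkeeping rather than mathematics: one must verify that each $L_{j+1}$ is genuinely a colon of $L_{j}$ by one variable so that \cite[Proposition 2.7]{C4} applies at every link, and align the residue classes $n\equiv 0,1,2,3\,(\mbox{mod}\ 4)$ with the case labels $n=4k,4k-3,4k-2,4k-1$; all the substantive Stanley-depth computations have already been carried out inside Theorem \ref{Thm1}.
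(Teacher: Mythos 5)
Your proposal is correct and follows essentially the same route as the paper: the paper's own proof also reads the values of $\mbox{sdepth}\,(S/L_{k})$ off the proof of Theorem \ref{Thm1} and cites \cite[Proposition 2.7]{C4} to transport them back to $S/J_{n,3}$ along the colon chain. Your write-up merely makes the chain of inequalities and the small cases $n=3,4,5$ explicit, which the paper leaves implicit.
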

  \begin{proof} Set $\varphi(n)=n-\lfloor \frac{n}{4}\rfloor-\lceil \frac{n}{4}\rceil$. From the proof of  Theorem \ref{Thm1}, we
see that $\mbox{sdepth}\,(S/L_{k})\le 1+\varphi(n)$ for  $n\equiv 1\,(\mbox{mod}\ 4)$  or $n\equiv 2\,(\mbox{mod}\ 4)$ and otherwise, $\mbox{sdepth}\,(S/L_{k})=\varphi(n)$. These are a direct consequence of  \cite[Proposition 2.7]{C4}.
  \end{proof}

\begin{coro} \label{cor2}
 \begin{itemize}
\item[(1)] $\mbox{depth}\,(S/J_{n,3})\leq n+1-\lfloor \frac{n}{4}\rfloor-\lceil \frac{n}{4}\rceil$ for $n\equiv 1\,(\mbox{mod}\ 4)$,\\
\item[(2)] $\mbox{depth}\,(S/J_{n,3})=n-\lfloor \frac{n}{4}\rfloor-\lceil \frac{n}{4}\rceil$ for $n\not\equiv 1\,(\mbox{mod}\ 4)$.
\end{itemize}
 \end{coro}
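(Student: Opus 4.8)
The plan is to get the lower bound for free and put all the work into matching upper bounds. Theorem \ref{Thm1}(1) already gives $\mbox{depth}\,(S/J_{n,3})\ge \varphi(n)$, where $\varphi(n)=n-\lfloor n/4\rfloor-\lceil n/4\rceil$, so for (2) it remains to show $\mbox{depth}\,(S/J_{n,3})\le \varphi(n)$ when $n\not\equiv 1$, and for (1) to show $\mbox{depth}\,(S/J_{n,3})\le \varphi(n)+1$ when $n\equiv 1\,(\mbox{mod}\ 4)$. First I would re-run the recursion of Theorem \ref{Thm1} verbatim — the ideals $L_0=J_{n,3}$, $L_{j+1}=(L_j:x_{4j})$, $U_{j+1}=(L_j,x_{4j})$ and the same chain of short exact sequences — but replace Lemma \ref{lem2} by the full Depth Lemma (Lemma \ref{lem1}). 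The decisive gain over the Stanley-depth computation is that the building blocks are now pinned down \emph{exactly}: Lemma \ref{lem3}(1) gives $\mbox{depth}\,(S/J_{m,2})=\lceil (m-1)/3\rceil$ with no gap, and Lemma \ref{lem4} gives $\mbox{depth}=\mbox{sdepth}$ for every $S/I_{m,2}$. Repeating the three case computations with these equalities — and using that for depth the quotient by a regular element drops depth by exactly one and the depth of a direct sum is exactly the minimum — yields the exact endpoint values $\mbox{depth}\,(S/L_k)=\varphi(n)$ for $n\not\equiv 1$ and $\mbox{depth}\,(S/L_k)=\varphi(n)+1$ for $n\equiv 1$, together with $\mbox{depth}\,(S/U_j)\ge \varphi(n)$ for all $1\le j\le k$.

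For (2) I would then propagate these values up the chain. The only consequence of the Depth Lemma I need is: in $0\to A\to B\to C\to 0$, if $\mbox{depth}\,(A)\le \mbox{depth}\,(C)$ then $\mbox{depth}\,(B)=\mbox{depth}\,(A)$. Indeed Lemma \ref{lem1}(i) gives $\mbox{depth}\,(B)\ge \mbox{depth}\,(A)$, while Lemma \ref{lem1}(ii) forces $\mbox{depth}\,(B)\le \mbox{depth}\,(A)$, since otherwise $\mbox{depth}\,(A)\ge \min\{\mbox{depth}\,(B),\mbox{depth}\,(C)+1\}\ge \mbox{depth}\,(A)+1$. Applying this to each $0\to S/L_{j+1}\to S/L_j\to S/U_{j+1}\to 0$, starting from $\mbox{depth}\,(S/L_k)=\varphi(n)$ and using $\mbox{depth}\,(S/U_{j+1})\ge \varphi(n)$, a downward induction on $j$ gives $\mbox{depth}\,(S/L_j)=\varphi(n)$ for every $j$, hence $\mbox{depth}\,(S/J_{n,3})=\varphi(n)$.

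The case $n\equiv 1$ is where the argument must change, and this is the main obstacle: here $\mbox{depth}\,(S/L_k)=\varphi(n)+1$ while $\mbox{depth}\,(S/U_k)=\varphi(n)$, so the first propagation step sits in the ``gap one'' situation $\mbox{depth}\,(A)=\mbox{depth}\,(C)+1$, in which the Depth Lemma does not determine $\mbox{depth}\,(B)$ and in principle permits a jump. To secure the upper bound $\le \varphi(n)+1$ I would drop the propagation and instead use that $S/J_{n,3}$ is a reduced (Stanley–Reisner) ring, so $\mbox{Ass}\,(S/J_{n,3})=\mbox{Min}\,(S/J_{n,3})$ and $\mbox{depth}\,(S/J_{n,3})\le \dim(S/\mathfrak p)$ for every such $\mathfrak p$. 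The minimal primes are the minimal vertex covers of the ``consecutive-triple'' hypergraph of $C_n$, and a set $C$ is a cover precisely when its complement contains no three cyclically consecutive vertices. Thus it suffices to exhibit one minimal cover of size $n-\varphi(n)-1$, equivalently a \emph{maximal} set with no three consecutive vertices of size $\varphi(n)+1$.

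The construction is explicit: for $n=4k-3$ take $\{1,2\}\cup\{4,6,8,\dots,4k-4\}$, of size $(n+1)/2=\varphi(n)+1$; one checks directly that it contains no three consecutive vertices and that adjoining any omitted vertex would create three, so it is maximal and its complement is a minimal cover, giving $\mbox{depth}\,(S/J_{n,3})\le \varphi(n)+1$ and finishing (1). I would also remark that this associated-prime bound re-proves the upper half of (2) uniformly — for $n$ even take the odd-index set of size $n/2=\varphi(n)$, and for $n=4k-1$ the analogous one-double set $\{1,2\}\cup\{4,6,\dots,4k-2\}$ of size $(n+1)/2=\varphi(n)$ — so one may, if preferred, bypass the delicate propagation entirely and run the single combinatorial argument for both parts. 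The crux of the corollary is therefore only the verification that these explicit vertex sets are maximal with no three consecutive vertices; everything else is bookkeeping with the Depth Lemma and the exact formulas of Lemmas \ref{lem3} and \ref{lem4}.
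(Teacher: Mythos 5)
Your proof is correct, but it takes a genuinely different route from the paper's. The paper disposes of the upper bound in one line by citing \cite[Corollary 1.3]{R2}, the depth analogue of \cite[Proposition 2.7]{C4}: passing to a colon ideal can only increase the depth of the quotient, so $\mbox{depth}\,(S/J_{n,3})\le \mbox{depth}\,(S/L_1)\le\dots\le \mbox{depth}\,(S/L_k)$, and the right-hand side is computed exactly (Lemma \ref{lem3}(1) has no gap for depth, unlike its sdepth counterpart) to be $\varphi(n)$ for $n\not\equiv 1$ and $\varphi(n)+1$ for $n\equiv 1\,(\mbox{mod}\ 4)$; this works uniformly in all residue classes. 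You avoid that external result entirely. Your Depth Lemma propagation for $n\not\equiv 1$ is valid --- the observation that $\mbox{depth}\,(A)\le \mbox{depth}\,(C)$ forces $\mbox{depth}\,(B)=\mbox{depth}\,(A)$ in a short exact sequence follows correctly from Lemma \ref{lem1}(i) and (ii) --- and your diagnosis that this propagation genuinely fails in the ``gap one'' situation at $n\equiv 1$ is a real obstruction which the paper never has to confront. Your substitute argument, bounding $\mbox{depth}\,(S/J_{n,3})$ by $\dim(S/\mathfrak{p})$ for an associated (hence minimal) prime and exhibiting an explicit maximal set with no three cyclically consecutive vertices of size $\varphi(n)+1$ (resp.\ $\varphi(n)$), checks out: the sets you write down are independent and maximal, so their complements are minimal vertex covers of the path hypergraph of $C_n$. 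What the paper's route buys is brevity; what yours buys is independence from Rauf's colon-ideal inequality and an explicit combinatorial certificate for the upper bound that covers both parts at once. One small caution: in recomputing $\mbox{depth}\,(S/U_{j+1})$ you invoke ``depth of a direct sum is the minimum,'' but the relevant object is $S/(V_{j+1}+W_{j+1}+(x_{4j}))$, a tensor product over $K$ of rings in disjoint variable sets, where depth is \emph{additive}; that additivity (plus the drop by exactly one modulo the regular element $x_{4j}$) is what yields $\mbox{depth}\,(S/U_{j+1})\ge \varphi(n)$, which your propagation step needs.
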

  \begin{proof} Set $\varphi(n)=n-\lfloor \frac{n}{4}\rfloor-\lceil \frac{n}{4}\rceil$. Replacing stanley depth
by depth in the proof of  Theorem \ref{Thm1}, we
see that $\mbox{depth}\,(S/L_{k})=1+\varphi(n)$ for  $n\equiv 1\,(\mbox{mod}\ 4)$ and otherwise, $\mbox{depth}\,(S/L_{k})=\varphi(n)$. These are a direct consequence of
 \cite[Corollary 1.3]{R2}.
  \end{proof}

\vspace{3mm}
 \begin{prop}
 \label{prop1}  $\mbox{sdepth}\,(J_{n,3}/I_{n,3})=n+1-\lfloor \frac{n}{4}\rfloor-\lceil \frac{n}{4}\rceil$ for all $n\geq 4$.
 \end{prop}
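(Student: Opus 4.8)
\emph{Notation and reductions.} Set $\varphi(n)=n-\lfloor n/4\rfloor-\lceil n/4\rceil$, write $M=J_{n,3}/I_{n,3}$, and keep the monomials $u_i=x_ix_{i+1}x_{i+2}$ from the proof of Theorem~\ref{Thm1}, so that $I_{n,3}=(u_1,\dots,u_{n-2})$ and $J_{n,3}=I_{n,3}+(u_{n-1},u_n)$ with $u_{n-1}=x_1x_{n-1}x_n$, $u_n=x_1x_2x_n$; as there, $u_j=0$ for $j\le 0$. The claim is $\mbox{sdepth}(M)=\varphi(n)+1$. The cases $n=4,5$ are immediate (cf. Examples~\ref{exam1} and \ref{exam2}), so I assume $n\ge 6$. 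Note that every monomial of $M$ is divisible by both $x_1$ and $x_n$, since a monomial of $J_{n,3}$ avoiding $I_{n,3}$ must be a multiple of $u_{n-1}$ or of $u_n$.

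\emph{Lower bound.} The plan is to filter $M$ by the cyclic submodule generated by $\overline{u_n}$ and to use
$$0\longrightarrow \overline{u_n}\,S\longrightarrow M\longrightarrow M/\overline{u_n}\,S\longrightarrow 0,$$
where $\overline{u_n}\,S\cong S/(I_{n,3}:u_n)$ and $M/\overline{u_n}\,S\cong S/\big((I_{n,3}+(u_n)):u_{n-1}\big)$ up to a shift. A direct colon computation gives $(I_{n,3}+(u_n)):u_{n-1}=(x_2,\,x_{n-2},\,u_3,\dots,u_{n-5})$ and $I_{n,3}:u_n=(x_3,\,u_4,\dots,u_{n-4},\,x_{n-2}x_{n-1})$. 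In the first ideal the variables $x_1,x_{n-1},x_n$ do not occur, so after killing $x_2,x_{n-2}$ and splitting off these three variables by \cite[Lemma 3.6]{HVZ} the quotient is $K[x_1,x_{n-1},x_n]\otimes_K(S_{n-5}/I_{n-5,3})$, whence by Lemma~\ref{lem4}
$$\mbox{sdepth}\big(M/\overline{u_n}\,S\big)=3+\mbox{sdepth}(S_{n-5}/I_{n-5,3})=3+(\varphi(n)-2)=\varphi(n)+1.$$
The second ideal, after killing $x_3$ and splitting off $x_1,x_2,x_n$, reduces to a path ideal carrying one extra quadratic $x_{n-2}x_{n-1}$ at the tail; a second short exact sequence splitting on $x_{n-1}$ breaks this into genuine path-ideal quotients $S_{n-5}/I_{n-5,3}$ and $S_{n-6}/I_{n-6,3}$, and a short check over the four residues of $n$ modulo $4$ yields $\mbox{sdepth}(\overline{u_n}\,S)\ge\varphi(n)+1$. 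Feeding both estimates into Lemma~\ref{lem2} gives $\mbox{sdepth}(M)\ge\varphi(n)+1$.

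\emph{A regular element.} I would also record the following, which both stabilises the answer and isolates the remaining difficulty: $x_1$ is a nonzerodivisor on $M$. Indeed, if $m\in J_{n,3}\setminus I_{n,3}$ and $x_1m\in I_{n,3}$, then, $u_1=x_1x_2x_3$ being the only generator of $I_{n,3}$ containing $x_1$, one must have $x_2x_3\mid m$; but then $u_1\mid m$ already, contradicting $m\notin I_{n,3}$. Hence by \cite[Theorem 1.1]{R1},
$$\mbox{sdepth}(M)=\mbox{sdepth}(M/x_1M)+1,$$
and a similar argument shows $x_n$ is regular on $M/x_1M$, reducing the problem to the exact Stanley depth of a two-generated monomial quotient in $K[x_2,\dots,x_{n-1}]$.

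\emph{Upper bound: the main obstacle.} The hard half is $\mbox{sdepth}(M)\le\varphi(n)+1$. The tempting shortcuts fail. The colon monotonicity \cite[Proposition 2.7]{C4} used in Corollary~\ref{cor1} only bounds $\mbox{sdepth}(M)$ by $\mbox{sdepth}\big((J_{n,3}:u)/(I_{n,3}:u)\big)$, and for every monomial $u$ this quantity is $\ge\varphi(n)+1$ and can be strictly larger (already for $n=5$ a colon by a generator yields $S/(x_3)$, of Stanley depth $4$), because the middle relations $u_i$ prevent a single colon from killing both connecting variables $x_2$ and $x_{n-2}$ simultaneously. Likewise the syzygy $\overline{x_1x_2x_{n-1}x_n}=x_{n-1}\,\overline{u_n}=x_2\,\overline{u_{n-1}}$ forces one of the two distinguished Stanley spaces to drop a variable, but the resulting local bound is no longer tight once $n\ge 9$. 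Thus the upper bound genuinely requires a global argument: I would build an explicit Stanley decomposition of $M$ whose smallest piece has size $\varphi(n)+1$ and match it with an obstruction in the Herzog--Vl\u{a}doiu--Zheng interval poset, consuming the middle path relations four variables at a time exactly as the filtration $L_0\supset L_1\supset\cdots$ does in Theorem~\ref{Thm1}. Carrying this bookkeeping through the four residue classes of $n$ modulo $4$ is where the real work lies.
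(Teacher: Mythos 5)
There is a genuine gap: you prove, and only in outline, the lower bound $\mbox{sdepth}\,(J_{n,3}/I_{n,3})\geq \varphi(n)+1$ (where $\varphi(n)=n-\lfloor \frac{n}{4}\rfloor-\lceil \frac{n}{4}\rceil$), while the statement is an equality. Even within the lower bound, the estimate $\mbox{sdepth}\,(\overline{u_n}S)\geq \varphi(n)+1$ is asserted after ``a short check over the four residues'' that you do not carry out, and the upper bound is explicitly deferred as ``where the real work lies.'' A short exact sequence plus Lemma \ref{lem2} can never deliver that upper bound, so the argument as proposed cannot be completed along the route you describe without a substantial new idea.

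The idea you are missing is that no filtration is needed at all: since a monomial of $J_{n,3}$ outside $I_{n,3}$ must be divisible by $x_1x_2x_n$ or by $x_1x_{n-1}x_n$, sorting monomials according to which of $x_2$, $x_{n-1}$ divide them (exactly one, or both) splits $J_{n,3}/I_{n,3}$ as a \emph{direct sum} of three $\mathbb{Z}^n$-graded modules, each of the form $m\bigl(K[x_a,\dots,x_b]/(\text{path ideal})\bigr)[\,\text{three or four extra variables}\,]$, i.e.\ a polynomial extension of a line-graph quotient $S_m/I_{m,3}$ whose Stanley depth is known \emph{exactly} by Lemma \ref{lem4} and \cite[Lemma 3.6]{HVZ}. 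The Stanley depth of the direct sum is the minimum of the three values, $\min\{\varphi(n)+1,\; n+1-\lfloor\frac{n-1}{4}\rfloor-\lceil\frac{n-1}{4}\rceil\}=\varphi(n)+1$, which yields both inequalities simultaneously. Your exact sequence $0\to\overline{u_n}S\to M\to M/\overline{u_n}S\to 0$ is in fact a coarsening of this splitting ($\overline{u_n}S$ is the sum of the two summands containing $x_2$, and your computation of $M/\overline{u_n}S$ reproduces the remaining summand), so you were one observation away from the clean argument; your remark that $x_1$ and $x_n$ are regular on $M$ is correct but does not close the gap.
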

\begin{proof} One can easily check that $J_{4,3}/I_{4,3}\simeq x_{1}x_{3}x_{4}K[x_{1},x_{3},x_{4}]\oplus x_{1}x_{2}x_{4}K[x_{1},x_{2},x_{4}]$.
Thus, $\mbox{sdepth}\,(J_{4,3}/I_{4,3})=3$, as required. Similarly,
for $n=5$, we have $J_{5,3}/I_{5,3}\simeq x_{1}x_{4}x_{5}K[x_{1},x_{4},x_{5}]\oplus x_{1}x_{2}x_{5}K[x_{1},x_{2},x_{5}]\oplus x_{1}x_{2}x_{4}x_{5}K[x_{1},x_{2},x_{4},x_{5}]$,
for $n=6$, we have  $J_{6,3}/I_{6,3}\simeq x_{1}x_{5}x_{6}K[x_{1},x_{3},x_{5},x_{6}]\oplus x_{1}x_{2}x_{6}K[x_{1},x_{2},x_{4},x_{6}]\oplus x_{1}x_{2}x_{5}x_{6}K[x_{1},x_{2},x_{5},x_{6}]$ and for $n=7$, we get  $J_{7,3}/I_{7,3}\simeq x_{1}x_{6}x_{7}K[x_{1},x_{3},x_{4},x_{6},x_{7}]\oplus x_{1}x_{2}x_{7}K[x_{1},x_{2},x_{4},x_{5},x_{7}]\oplus x_{1}x_{2}x_{6}x_{7}K[x_{1},x_{2},x_{4},x_{6},x_{7}]$.

Now, assume $n\geq 8$, and let $u\in J_{n,3}$ be a monomial such that $u\notin I_{n,3}$. It follows that $u=x_{1}x_{n-1}x_{n}v_{1}$ or $u=x_{1}x_{2}x_{n}v_{2}$,
 with $v_{1}\in K[x_{1},\dots,x_{n-3},x_{n-1},x_{n}]$ and $v_{2}\in K[x_{1},x_{2},x_{4},\dots,x_{n}]$. We can write $v_{1}=x_{1}^{\alpha}x_{n-1}^{\beta}x_{n}^{\gamma}w$
with $w\in K[x_{2},\dots,x_{n-3}]$. Since $u\notin I_{n,3}$, it follows that $w\notin (x_{2}x_{3},x_{3}x_{4}x_{5},\dots,x_{n-5}x_{n-4}x_{n-3})$.  Similarly,
we can write $v_{2}=x_{1}^{\alpha}x_{2}^{\beta}x_{n}^{\gamma}w$
with $w\in K[x_{4},\dots,x_{n-1}]$. Since $u\notin I_{n,3}$, it follows that $w\notin (x_{4}x_{5}x_{6},\dots,x_{n-4}x_{n-3}x_{n-2},x_{n-2}x_{n-1})$.
Therefore, we have the $S$-module isomorphism:
\begin{eqnarray*} J_{n,3}/I_{n,3}&\simeq& x_{1}x_{2}x_{n}(\frac{K[x_{4},\dots,x_{n-2}]}{(x_{4}x_{5}x_{6},\dots,x_{n-4}x_{n-3}x_{n-2})})[x_{1},x_{2},x_{n}]\\
&\oplus & x_{1}x_{n-1}x_{n}(\frac{K[x_{3},\dots,x_{n-3}]}{(x_{3}x_{4}x_{5},\dots,x_{n-5}x_{n-4}x_{n-3})})[x_{1},x_{n-1},x_{n}]\\
&\oplus & x_{1}x_{2}x_{n-1}x_{n}(\frac{K[x_{4},\dots,x_{n-3}]}{(x_{4}x_{5}x_{6},\dots,x_{n-5}x_{n-4}x_{n-3})})[x_{1},x_{2},x_{n-1},x_{n}].
\end{eqnarray*}
 Therefore, by Lemma \ref{lem4} and \cite[Lemma  3.6]{HVZ}, we obtain
 \begin{eqnarray*}\hspace{-5mm}\mbox{sdepth}\,(\frac{J_{n,3}}{I_{n,3}})\!\!\!\!&=&\!\!\!\!
 \mbox{min}\,\{3\!+\!(n\!-\!4)\!-\!\lfloor\! \frac{\!n-4\!}{4}\!\rfloor\!-\!\lceil \frac{\!n-4\!}{4}\rceil,
 4\!+\!(n\!-\!5)\!-\!\lfloor\! \frac{\!n-5\!}{4}\!\rfloor-\lceil \frac{\!n-5\!}{4}\rceil\}\\
 &=& \mbox{min}\,\{n+1-\lfloor \frac{n}{4}\rfloor-\lceil \frac{n}{4}\rceil,n+1-\lfloor \frac{n-1}{4}\rfloor-\lceil \frac{n-1}{4}\rceil\}\\
 &=&n+1-\lfloor \frac{n}{4}\rfloor-\lceil \frac{n}{4}\rceil.\end{eqnarray*}
 \end{proof}

\section{Depth and Stanley depth of quotient of the path ideal of length $n-1$ or $n-2$}

In this section, we will give some formulas for depth and  stanley depth  of quotient of the path ideal of length $n-1$ or $n-2$.

\begin{prop}
 \label{prop2} $\mbox{sdepth}\,(S/J_{n,n-1})=\mbox{depth}\,(S/J_{n,n-1})=n-2$.
 \end{prop}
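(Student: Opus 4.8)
The plan is to induct on $n$, splitting off the variable $x_n$ through a short exact sequence exactly as in the proof of Theorem \ref{Thm1}. Throughout write $S_{n-1}=K[x_1,\dots,x_{n-1}]$, and first record what $J_{n,n-1}$ is: a path of length $n-1$ in $C_n$ runs through $n-1$ consecutive vertices, hence omits exactly one vertex, so the generators are the monomials $x_1\cdots x_n/x_k$ for $k=1,\dots,n$ (with $x_0:=x_n$). Thus $J_{n,n-1}$ is precisely the ideal generated by all squarefree monomials of degree $n-1$. A monomial prime $(x_i:i\in A)$ contains $J_{n,n-1}$ iff no single vertex can be omitted to escape it, i.e. iff $|A|\ge 2$; consequently the minimal primes are exactly the ideals $(x_i,x_j)$ with $i\neq j$, so $\mbox{height}\,(J_{n,n-1})=2$ and $\dim\,(S/J_{n,n-1})=n-2$. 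Since $\mbox{depth}\,(S/I)\le\dim\,(S/I)$ always, and $\mbox{sdepth}\,(S/I)\le\dim\,(S/I)$ as well (in any Stanley decomposition the maximal dimension of the Stanley spaces equals the Krull dimension, by the Hilbert-series pole-order argument, so the minimum is no larger), this already yields the two upper bounds $\mbox{depth}\,(S/J_{n,n-1})\le n-2$ and $\mbox{sdepth}\,(S/J_{n,n-1})\le n-2$.

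For the matching lower bounds I would set $L=(J_{n,n-1}:x_n)$ and $U=(J_{n,n-1},x_n)$ and study
$$0\longrightarrow S/L\longrightarrow S/J_{n,n-1}\longrightarrow S/U\longrightarrow 0.$$
The crucial point is that both operations simplify cleanly. Among the generators only $x_1\cdots x_{n-1}$ avoids $x_n$; every other generator is divisible by $x_n$, and dividing it by $x_n$ produces a squarefree monomial of degree $n-2$ in $x_1,\dots,x_{n-1}$, with all such monomials occurring (and $x_1\cdots x_{n-1}$ then redundant). Hence $L\simeq J_{n-1,n-2}S$, the extension to $S$ of the corresponding cyclic path ideal in $S_{n-1}$, while $U=(x_n,\,x_1\cdots x_{n-1})$, so that $S/U\simeq S_{n-1}/I_{n-1,n-1}$.

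Now I would feed in the auxiliary results. The path ideal of length $n-1$ on the line graph with $n-1$ vertices is the principal ideal $(x_1\cdots x_{n-1})=I_{n-1,n-1}$, so Lemma \ref{lem4} gives
$$\mbox{sdepth}\,(S/U)=\mbox{depth}\,(S/U)=(n-1)+1-\Big\lfloor\tfrac{n}{n}\Big\rfloor-\Big\lceil\tfrac{n}{n}\Big\rceil=n-2.$$
For $S/L$, the inductive hypothesis applied to $C_{n-1}$ gives $\mbox{sdepth}\,(S_{n-1}/J_{n-1,n-2})=\mbox{depth}\,(S_{n-1}/J_{n-1,n-2})=n-3$, and adjoining the single free variable $x_n$ raises both invariants by $1$, by \cite[Lemma 3.6]{HVZ} for Stanley depth and by the standard fact $\mbox{depth}\,(M[x])=\mbox{depth}\,(M)+1$, so $\mbox{sdepth}\,(S/L)=\mbox{depth}\,(S/L)=n-2$. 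Plugging these into the Depth Lemma \ref{lem1}(i) gives $\mbox{depth}\,(S/J_{n,n-1})\ge n-2$, and into Lemma \ref{lem2} gives $\mbox{sdepth}\,(S/J_{n,n-1})\ge n-2$; together with the upper bounds above this forces both equalities. The base case $n=3$ is $J_{3,2}$, which is handled by Lemma \ref{lem3}.

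I expect the main obstacle to be the combinatorial bookkeeping identifying $L=(J_{n,n-1}:x_n)$ with $J_{n-1,n-2}S$, namely verifying that dividing the $x_n$-divisible generators through by $x_n$ yields exactly the full set of squarefree $(n-2)$-monomials in $x_1,\dots,x_{n-1}$ and nothing spurious, so that the reduced problem is the same proposition for $C_{n-1}$. Once this identification and the companion isomorphism $S/U\simeq S_{n-1}/I_{n-1,n-1}$ are in hand the induction is immediate; the only other step needing explicit justification is the inequality $\mbox{sdepth}\le\dim$, which I would derive from the pole-order (top-dimension) characterisation of the Krull dimension of a Stanley decomposition.
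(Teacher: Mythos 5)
Your proof is correct and follows essentially the same route as the paper: the same splitting $L=(J_{n,n-1}:x_n)\simeq J_{n-1,n-2}S$ and $U=(J_{n,n-1},x_n)=(x_1\cdots x_{n-1},x_n)$, the same short exact sequence, and the same induction on $n$ (with base case $n=3$ from Lemma \ref{lem3}), yielding the lower bound $n-2$ via Lemmas \ref{lem1} and \ref{lem2}. The only divergence is the upper bound: the paper gets $\mbox{sdepth}\,(S/J_{n,n-1})\le\mbox{sdepth}\,(S/(J_{n,n-1}:x_n))=n-2$ from \cite[Proposition 2.7]{C4}, whereas you observe that $J_{n,n-1}$ is the ideal of all squarefree monomials of degree $n-1$, so its minimal primes are the $(x_i,x_j)$, $\dim(S/J_{n,n-1})=n-2$, and both invariants are bounded above by the dimension; this is a perfectly valid and somewhat more self-contained alternative (it avoids citing the colon-ideal inequalities), at the small cost of having to justify $\mbox{sdepth}\le\dim$ via the Hilbert-series argument you sketch.
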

\begin{proof}  We apply induction on $n$.
 The case $n=3$ follows from Lemma \ref{lem3}.
Assume now that $n\geq 4$.
Since $J_{n,n-1}=(\prod\limits_{i=1}^{n-1}x_{i},\prod\limits_{i=2}^{n}x_{i},(\prod\limits_{i=3}^{n}x_{i})x_{1}, \dots,(\prod\limits_{i=k}^{n}x_{i})(\prod\limits_{i=1}^{k-2}x_{i}),\dots,\\
x_{n}\prod\limits_{i=1}^{n-2}x_{i})$, we obtain  $(J_{n,n-1}:x_{n})=(\prod\limits_{i=1}^{n-2}x_{i},\prod\limits_{i=2}^{n-1}x_{i},(\prod\limits_{i=3}^{n-1}x_{i})x_{1}, \dots,(\prod\limits_{i=k}^{n-1}x_{i})(\prod\limits_{i=1}^{k-2}x_{i}),\dots,\\
x_{n-1}\prod\limits_{i=1}^{n-3}x_{i})=J_{n-1,n-2}S$,
$(J_{n,n-1},x_{n})=(\prod\limits_{i=1}^{n-1}x_{i}, x_{n})$. Hence we get  $S/(J_{n,n-1}:x_{n})=(S_{n-1}/J_{n-1,n-2})[x_{n}]$.  Using the induction hypothesis and \cite[Lemma  3.6]{HVZ}, we conclude
$$\mbox{sdepth}\,(S/(J_{n,n-1}:x_{n}))=1+\mbox{sdepth}\,(S_{n-1}/J_{n-1,n-2})=n-2,$$
 and
 $$\mbox{depth}\,(S/(J_{n,n-1}:x_{n}))=1+\mbox{depth}\,(S_{n-1}/J_{n-1,n-2})=n-2.$$
 On the other hand, we obtain $\mbox{sdepth}\,(S/(J_{n,n-1},x_{n}))=n-2$
by \cite[Theorem 1.1]{R1}.
By applying Lemmas \ref{lem1} and \ref{lem2} on  the   exact sequence
$$0\longrightarrow S/(J_{n,n-1}:x_{n})\stackrel{\cdot x_{n}}\longrightarrow S/J_{n,n-1}\longrightarrow S/(J_{n,n-1},x_{n})\longrightarrow 0,$$
 we obtain $\mbox{depth}\,(S/J_{n,n-1})\geq n-2$ and $\mbox{sdepth}\,(S/J_{n,n-1})\geq n-2$.
Therefore, it follows that $\mbox{sdepth}\,(S/J_{n,n-1})=n-2$ by  \cite[Proposition 2.7]{C4}.
\end{proof}

 \vspace{3mm}
\begin{prop}
 \label{prop3} \begin{itemize}
\item[(1)] $n-3\leq \mbox{sdepth}\,(S/J_{n,n-2})\leq n-2$,\\
\item[(2)] $n-3\leq \mbox{depth}\,(S/J_{n,n-2})\leq n-2$.
\end{itemize}
 \end{prop}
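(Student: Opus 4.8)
The plan is to run the same colon/quotient recursion used in Proposition \ref{prop2}, splitting off the last variable $x_{n}$. We may assume $n\geq 4$ (for $n=3$ we have $J_{3,1}=\frak{m}$, so $S/J_{3,1}=K$ has depth and Stanley depth $0=n-3$). Write $J=J_{n,n-2}$. Its generators are the products of $n-2$ cyclically consecutive variables, i.e. the $n$ monomials obtained by deleting one adjacent pair $\{x_{a},x_{a+1}\}$; exactly two of them, $x_{1}\cdots x_{n-2}$ and $x_{2}\cdots x_{n-1}$, avoid $x_{n}$. Hence
$$(J,x_{n})=(x_{1}\cdots x_{n-2},\,x_{2}\cdots x_{n-1},\,x_{n}),$$
so $S/(J,x_{n})\simeq S_{n-1}/I_{n-1,n-2}$, and Lemma \ref{lem4} gives $\mbox{depth}\,(S/(J,x_{n}))=\mbox{sdepth}\,(S/(J,x_{n}))=n-3$. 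For the colon, every generator divisible by $x_{n}$ loses its $x_{n}$ and the two surviving generators become redundant, so that $(J:x_{n})=\widetilde{J}S$, where
$$\widetilde{J}=\Big(\frac{x_{1}\cdots x_{n-1}}{x_{k}x_{k+1}}\ \Big|\ 1\leq k\leq n-2\Big)\subset S_{n-1}=K[x_{1},\dots,x_{n-1}].$$
Thus $S/(J:x_{n})\simeq (S_{n-1}/\widetilde{J})[x_{n}]$, and by \cite[Lemma 3.6]{HVZ} its depth and Stanley depth are $1+\mbox{depth}\,(S_{n-1}/\widetilde{J})$ and $1+\mbox{sdepth}\,(S_{n-1}/\widetilde{J})$.

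The technical heart is to control $S_{n-1}/\widetilde{J}$, and I would isolate it as a lemma about the family $A_{m}=(\frac{x_{1}\cdots x_{m}}{x_{k}x_{k+1}}\mid 1\leq k\leq m-1)\subset K[x_{1},\dots,x_{m}]$, of which $\widetilde{J}=A_{n-1}$ is a member. The claim is $\mbox{depth}\,(K[x_{1},\dots,x_{m}]/A_{m})=\mbox{sdepth}\,(K[x_{1},\dots,x_{m}]/A_{m})=m-2$, proved by induction on $m$. Splitting off $x_{1}$ yields $(A_{m},x_{1})=(x_{3}\cdots x_{m},x_{1})$, whose quotient $K[x_{2},\dots,x_{m}]/(x_{3}\cdots x_{m})$ is a hypersurface quotient of depth $m-2$ and, by \cite[Theorem 1.1]{R1}, of Stanley depth $m-2$; on the other hand $(A_{m}:x_{1})$ is of the very same shape on the variables $x_{2},\dots,x_{m}$, so that $K[x_{1},\dots,x_{m}]/(A_{m}:x_{1})\simeq (K[x_{2},\dots,x_{m}]/A_{m-1})[x_{1}]$. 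Feeding the inductive value $m-3$ for $A_{m-1}$ through Lemma \ref{lem1}(i) and Lemma \ref{lem2} on the short exact sequence associated to $x_{1}$ gives $\geq m-2$, while the reverse inequality is the Krull–dimension bound $\dim(K[x_{1},\dots,x_{m}]/A_{m})=m-2$ (the minimal primes being the $(x_{i},x_{j})$ with $j>i+1$). The base case $A_{3}=(x_{1},x_{3})$ gives $K[x_{2}]$, of depth and Stanley depth $1$. In particular $\mbox{depth}\,(S_{n-1}/\widetilde{J})=\mbox{sdepth}\,(S_{n-1}/\widetilde{J})=n-3$, so $S/(J:x_{n})$ has depth and Stanley depth equal to $n-2$.

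With both end terms understood I would finish via the short exact sequence
$$0\longrightarrow S/(J:x_{n})\stackrel{\cdot x_{n}}\longrightarrow S/J\longrightarrow S/(J,x_{n})\longrightarrow 0.$$
Lemma \ref{lem1}(i) and Lemma \ref{lem2} then give
$$\mbox{depth}\,(S/J)\geq\min\{n-2,n-3\}=n-3,\qquad \mbox{sdepth}\,(S/J)\geq\min\{n-2,n-3\}=n-3,$$
which are the asserted lower bounds. For the upper bounds I would invoke the colon comparisons $\mbox{depth}\,(S/J)\leq\mbox{depth}\,(S/(J:x_{n}))=n-2$ by \cite[Corollary 1.3]{R2} and $\mbox{sdepth}\,(S/J)\leq\mbox{sdepth}\,(S/(J:x_{n}))=n-2$ by \cite[Proposition 2.7]{C4}; equivalently, both follow from $\dim(S/J)=n-2$, since the minimal primes of $J$ are precisely the height-two monomial primes $(x_{i},x_{j})$ with $x_{i},x_{j}$ non-adjacent in $C_{n}$.

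The main obstacle is the inductive lemma on $A_{m}$: one must verify the precise shape of $(J:x_{n})$ (in particular the redundancy of the two $x_{n}$-free generators) and then observe the self-similarity $(A_{m}:x_{1})\simeq A_{m-1}$ on a shorter variable set, which is exactly what makes the induction close. Once this self-similarity is established, every remaining step is a routine application of the Depth Lemma, Lemma \ref{lem2}, Lemma \ref{lem4}, \cite[Lemma 3.6]{HVZ} and \cite[Theorem 1.1]{R1}.
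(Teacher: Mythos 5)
Your proposal is correct and is essentially the paper's own argument in different packaging: your ideal $A_{m}$ is exactly the paper's successive colon ideal $L_{j}$ (with $m=n-j$), so your inductive lemma on the self-similarity $(A_{m}:x_{1})\simeq A_{m-1}$ replays the paper's explicit chain $L_{0},L_{1},\dots,L_{n-4}$ of colon/quotient short exact sequences, with the same inputs (Lemmas \ref{lem1}, \ref{lem2}, \ref{lem4}, \cite[Lemma 3.6]{HVZ}, \cite[Theorem 1.1]{R1}) and the same colon-based upper bounds via \cite[Corollary 1.3]{R2} and \cite[Proposition 2.7]{C4}. All the identifications you rely on ($(J,x_{n})$ giving $S_{n-1}/I_{n-1,n-2}$, the redundancy of the two $x_{n}$-free generators in $(J:x_{n})$, and $\dim=m-2$) check out.
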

\begin{proof}  The case $n=3$ is trivial.
 The case $n=4$ follows from Lemma \ref{lem3}. We may assume that $n\geq 5$.
Set $L_{0}=J_{n,n-2}$, $L_{j}=(L_{j-1}:x_{n-j+1})$ and $U_{j}=(L_{j-1},x_{n-j+1})$ for all $1\leq j\leq n-4$.
We conclude that $L_{0}=(\prod\limits_{i=1}^{n-2}\!x_{i},\prod\limits_{i=2}^{n-1}\!x_{i},\prod\limits_{i=3}^{n}\!x_{i},(\prod\limits_{i=4}^{n}x_{i})x_{1}, \dots,\\
(\prod\limits_{i=k}^{n}\!x_{i})(\prod\limits_{i=1}^{k-3}x_{i}),\dots,x_{n}\prod\limits_{i=1}^{n-3}x_{i})$,
$L_{1}=(\prod\limits_{i=3}^{n-1} x_{i}, (\!\prod\limits_{i=4}^{n-1}\!x_{i})x_{1}, \dots,(\!\prod\limits_{i=k}^{n-1}\!x_{i})(\!\prod\limits_{i=1}^{k-3}\!x_{i}),\dots,
x_{n-1}\!\!\prod\limits_{i=1}^{n-4}x_{i},\\
\prod\limits_{i=1}^{n-3}\!x_{i})$,  and
$U_{1}=(\prod\limits_{i=1}^{n-2} x_{i},\prod\limits_{i=2}^{n-1} x_{i},x_{n})$. Since $S/U_{1}=S_{n-1}/I_{n-1,n-2}$,
we obtain  \\
$\mbox{sdepth}\,(S/U_{1})=\mbox{depth}\,(S/U_{1})=n-3$ by Lemma \ref{lem4}.
For any $1\leq j\leq n-4$,  by some simple computations, one can  see that $$L_{j}=(\prod\limits_{i=3}^{n-j} x_{i}, (\prod\limits_{i=4}^{n-j}x_{i})x_{1}, \dots,(\prod\limits_{i=k}^{n-j}x_{i})(\prod\limits_{i=1}^{k-3}x_{i}),\dots,x_{n-j}\prod\limits_{i=1}^{n-j-3}x_{i},\prod\limits_{i=1}^{n-j-2}x_{i}),$$
and $U_{j}=(U_{j-1},x_{n-j+1})=(\prod\limits_{i=1}^{n-j-1}\!x_{i},x_{n-j+1})$.
In particular, $L_{n-4}=(x_{3}x_{4},x_{4}x_{1},x_{1}x_{2})$ and $S/L_{n-4}\simeq (S_{4}/I_{4,2})[x_{5},\dots, x_{n}]$.
Therefore,  by Lemma \ref{lem4} and \cite[Lemma  3.6]{HVZ}, we get   $\mbox{sdepth}\,(S/L_{n-4})=\mbox{depth}\,(S/L_{n-4})=(n-4)+5-\lfloor \frac{5}{3}\rfloor-\lceil \frac{5}{3}\rceil=n-2$.
 On the other hand, we obtain  $\mbox{sdepth}\,(S/U_{j})=\mbox{depth}\,(S/U_{j})=n-2$ by \cite[Theorem 1.1]{R1}.
By applying Lemmas \ref{lem1} and \ref{lem2}  on  the   exact sequences
$$ 0\longrightarrow S/L_{j}\stackrel{\cdot x_{n-j+1}}\longrightarrow S/L_{j-1}\longrightarrow S/U_{j}\longrightarrow 0\hspace{5mm}\mbox{for}\
1\leq j\leq n-4,$$
 we  conclude   $\mbox{depth}\,(S/J_{n,n-2})\geq n-3$ and $\mbox{sdepth}\,(S/J_{n,n-2})\geq n-3$.

On the other hand, by \cite[Theorem 1.1]{R1} and  \cite[Proposition 2.7]{C4}, we have  $\mbox{depth}\,(S/J_{n,n-2})\leq \mbox{depth}\,(S/L_{n-4})$ and $\mbox{sdepth}\,(S/J_{n,n-2})\leq \mbox{sdepth}\,(S/L_{n-4})$.
This completes the proof.
\end{proof}


\begin{thebibliography}{99}


\bibitem{BHK} Bir\'o C,  Howard DM, Keller MT,  Trotter WT,  Young SJ. Interval partitions and Stanley
depth. J  Comb Theory  A  2010; 117:  475-482.

\bibitem{C4} Cimpoea{s} M. Several inequalities regarding Stanley depth.  Romanian Journal of Math
and Computer Science  2012; 2: 28-40.

\bibitem{C1}  Cimpoea{s} M. On the Stanley depth of edge ideals of line and cyclic graphs.   Romanian Journal of Math
and Computer Science  2015; 5: 70-75.

\bibitem{C2} Cimpoea{s} M. Stanley depth of the path ideal associated to a line graph. Preprint. ArXiv:1508.07540V1
[math. AC].

\bibitem{C3} CoCoATeam, CoCoA: A system for doing Computations in Commutative Algebra. Avaible at
http://cocoa.dima.unige.it

\bibitem{DGK}  Duval AM, Goeckneker B,  Klivans CJ,  Martine JL.  A non-partitionable Cohen-Macaulay simplicial complex.
 Invent  Math  2015; 299: 381-395.


\bibitem{H}  Herzog J. A Survey on Stanley Depth.  Lecture Notes in Mathematics 2083, Springer
2013; 3-45.

\bibitem{HVZ} Herzog  J,  Vl\u{a}doiu M,  Zheng X. How to compute the Stanley
depth of a monomial ideal. J  Algebra  2009; 322:   3151-3169.

\bibitem{M}  Morey S. Depths of powers of the edge ideal of a tree.  Comm  Algebra 2010;  38:
4042-4055.

\bibitem{R1}  Rauf A. Stanley decompositions, pretty clean filtrations and reductions modulo regular elements.
 Bull  Math  Soc  Sci  Math  Roumanie    2007; 50: 347-354.


\bibitem{R2}  Rauf A. Depth and sdepth of multigraded module.  Comm  Algebra   2010; 38: 773-784.

\bibitem{R3} Rinaldo G.  An algorithm to compute the Stanley depth of monomials ideals.  Le Mathematiche
Vol LXIII  2008; 243-256.

\bibitem{S}  Stanley RP. Linear Diophantine equations and local cohomology.  Invent  Math
1982; 68: 175-193.

\bibitem{S2} {S}tefan A.  Stanley depth of powers of the path ideal.  Preprint. ArXiv:1409.6072V1
[math. AC].

\bibitem{V1}  Vasconcelos WV. Arithmetic of Blowup Algebras.  London Math  Soc  Lecture Note
Series 195.  Cambridge University Press, Cambridge, UK: 1994.

\bibitem{V2}  Villarreal RH.  Monomial algebras.  Marcel Dekker,  New York, NY, USA: 2001.

\bibitem{Z1} Zhu GJ. A lower bound for stanley  depth  of  squarefree monomial ideals.
  Turkish Journal of Mathematics  2015; 40: 816-823.

\bibitem{Z2} Zhu GJ. Depth and Stanley depth of the edge ideals of some  $m$-line graphs and $m$-cyclic graphs with a common vertex.
  Romanian Journal of Math  and Computer Science  2015; 5: 118-129.





\end{thebibliography}
\end{document}